\newtheorem{thm}{Theorem}[section] 
\newtheorem{lem}{Lemma}[section]  
\newtheorem{cor}[thm]{Corollary}
\newtheorem{defn}{Definition}[section] 
\theoremstyle{definition}
\theoremstyle{remark}
\numberwithin{equation}{section}
\def\S{\Sigma} 
\def\n{\nabla}
\def\a{\alpha}
\def\n{\nabla}
\def\a{\alpha}
\def\G{\Gamma}
\def\l{\lambda}
\def\s{\sigma}
\def\ov{\overline}
\def\n{\nabla}
\def\<{\langle}
\def\>{\rangle}
\def\n{\nabla}
\def\RR{\mathbb{R}}
\def\SS{\mathbb{S}}
\def\a{\alpha}
\def\l{\lambda}
\def\s{\sigma}
\def\ov{\overline}
\def\R{\mathbb{R}}
\def\wt{\widetilde}
\patchcmd{\abstract}{\scshape\abstractname}{\textbf{\abstractname}}{}{}
\def\@makefnmark{} 
\numberwithin{equation}{section}
\numberwithin{exa}{section}
\begin{document}
\title [Prescribed $L_p$ quotient curvature problem]{Prescribed $L_p$ quotient curvature problem and related eigenvalue problem}
\author[X. Mei]{Xinqun Mei}
\address[X. Mei]{School of Mathematical Sciences, University of Science and Technology of China, Hefei, 230026, P.R.China. Mathematisches Institut, Albert-Ludwigs-Universit\"{a}t Freiburg, Freiburg im Breisgau, 79104, Germany}

\email{\href{mailto:xinqun.mei@math.uni-freiburg.de} {xinqun.mei@math.uni-freiburg.de}}
	
\author[G. Wang]{Guofang Wang}
\address[G. Wang]{Mathematisches Institut, Albert-Ludwigs-Universit\"{a}t Freiburg, 
		Freiburg im Breisgau, 79104, Germany}
\email{\href{mailto:guofang.wang@math.uni-freiburg.de} {guofang.wang@math.uni-freiburg.de}}
 
\author[L. Weng]{Liangjun Weng}
\address[L. Weng]{Dipartimento di Matematica, Universit\`a degli Studi di Roma "Tor Vergata",   Roma, 00133, Italy. Dipartimento di Matematica, Università di Pisa,  Pisa, 56127, Italy}

 \email{\href{mailto:liangjun.weng@uniroma2.it}  {liangjun.weng@uniroma2.it}}

\subjclass[2020]{Primary: 35J96 Secondary: 35J60, 53C42,  58J05.}

\keywords{$L_p$-quotient curvature problem, Hessian quotient equation, convexity, constant rank theorem, weighted gradient estimate} 

%\maketitle
\begin{abstract}
In this paper, we investigate the existence of admissible (and strictly convex) smooth solutions to the prescribed $L_p$ quotient type curvature problem with $p>1$. For cases where $p=k-l+1$ and $p> k-l+1$, we obtain an admissible solution without any additional conditions, which is strictly spherically convex under a convexity condition. Under the same convexity condition, we establish the existence of a strictly spherically convex solution for the case $p<k-l+1$, provided that the prescribed function is even, a condition known to be necessary.
\end{abstract}
\maketitle

%\tableofcontents

\section{Introduction}
In this paper we are interested in  the following prescribed $L_{p}$ quotient type curvature problem in $\RR^{n+1}$, namely

\ 

\textit{Given any positive smooth function $f$ on $\SS^{n}$,  can one find a closed, strictly convex hypersurface $\S\subset\RR^{n+1}$ satisfying% the following equation
 \begin{eqnarray}\label{quo-equ}
\frac{H_{k}(\kappa(X))}{H_{l}(\kappa(X))}=f(\nu(X))\<X, \nu(X)\>^{1-p}   ~~?
\end{eqnarray}}

\ 

\noindent Here $p>1$, $0\leq l<k\leq n$, $\kappa(X)=(\kappa_{1},\cdots, \kappa_{n})$ and $\nu(X)$ are the principal curvatures and the outward unit normal of $\S$ at $X$ respectively, and  $H_{k}(\kappa(X))$ is the normalized $k$th mean curvature (cf. Section \ref{sec2}).  It is equivalent  to finding a strictly spherical convex solution (or admissible solution) $u$ on $\SS^n$ satisfying 
\begin{eqnarray}\label{support-equ}
   \frac{H_{n-l}(A)}{H_{n-k}(A)}=\varphi u^{p-1},%\quad \text{on}~\SS^{n},
\end{eqnarray}
where $\varphi:=f^{-1}$ and the matrix $A:=\n^{2}u+u\sigma$, $\n^{2}u$ is the Hessian of $u$ w.r.t the standard round metric $\sigma$ on $\SS^{n}$. We call the function $u\in C^{2}(\SS^{n})$ is strictly spherical convex if $A>0$ as matrix. For the admissible solution see Definition \ref{admissble sol} below. Equation  \eqref{quo-equ}  is a self-similar solution of the following anisotropic curvature ﬂow  
\begin{eqnarray*}
\frac d {dt} X = \left (f(\nu)\frac{H_l(\kappa(X))}{H_k(\kappa(X))} \right)^{\frac 1 {1-p}} \nu
\end{eqnarray*}
and serves as singularity models of this flow.
 
For $p=1$, Equation  \eqref{quo-equ} corresponds to the prescribed Weingarten curvature problem in classical differential geometry. 
In particular, if further $k=n$ and $l=0$, it reduces to the classical Minkowski problem, which has been completely  solved,  through the work of Minkowski
\cite{Min}, Alexandrov \cite{Alex}, Lewy \cite{Lewy}, Nirenberg \cite{Nire}, Pogorelov \cite{Pog52}, Cheng-Yau \cite{CY76} and many other mathematicians.    
For the case $0=l<k<n$,  Guan-Guan \cite{GG02}  proved the existence of an origin-symmetric, strictly convex hypersurface that satisfies Equation  \eqref{quo-equ} when $f$ is even (or group invariant). Subsequently, 
 Sheng-Trudinger-Wang \cite{STW} dropped the evenness assumption at the cost of introducing an exponential weight factor in the equation. Under the assumptions of evenness and an additional convexity on $f$, Guan-Ma-Zhou \cite{GMZ}  solved Equation  \eqref{quo-equ} for $1\leq l<k<n$. It is worth noting that, except for the Minkowski problem,  finding the necessary conditions, and certainly sufficient conditions
 for the prescribed Weingarten curvature problem remains a challenging problem.  For more related research, see for instance \cite{BIS2023, CNS-4, Chern,  CW00,   Ger, Ger-2, GLM06, GM, TW} and references therein.

 When $p\neq 1$, in the case of $k=n$ and $l=0$, Equation  \eqref{support-equ} transforms into the celebrated $L_{p}$ Minkowski problem initiated by Lutwak \cite{Lut93}. This encompasses the logarithmic Minkowski problem ($p=0$, refer to \cite{BLYZ}) and the centro-affine Minkowski problem ($p=-1-n$, refer to \cite{CW06}).  If $k=n$ and $1\leq l<n$, Equation  \eqref{support-equ} becomes the $L_{p}$ Christoffel-Minkowski problem. Hu-Ma-Shen \cite{HMS2004} and Guan-Xia \cite{GX} solved the cases of $p\geq n-l+1$ and $1<p<n-l+1$ respectively. For more significant progress related to the $L_{p}$ Minkowski problem and further the $L_{p}$ Christoffel-Minkowski problem, see for instance \cite{CX, GL, GLW, LW, Lut96, Lut04}  and reference therein.

Inspired by the aforementioned $L_{p}$ Minkowski-type problem, Lee \cite[Theorem 1.2 for $p=k+1$ and Theorem 1.4 for $p>k+1$]{Lee} and Hu-Ivaki \cite[Theorem 1.1 for $1<p<k+1$]{HI} recently achieved the following result concerning the prescribed $L_{p}$ Weingarten curvature problem (where $1\leq k< n$ and $l=0$ for Equation  \eqref{quo-equ}). 

\

\noindent{\bf Theorem A}. 
Let $l=0$ and $1\leq k <n$. For any positive smooth function $f$ on $\SS^n$.
    \begin{enumerate}
       
         \item If $p=k+1$, and $f$ is even, 
         then there exists a  unique (up to a dilation) 
         origin-symmetric, smooth strictly convex hypersurface $\S$ and  a positive constant $\gamma$ satisfying  
        \begin{eqnarray*} 
           \<X, \nu(X)\>^{k} H_{k}(\kappa(X))=\gamma f(\nu(X)),\quad \forall ~ X\in\S.
        \end{eqnarray*}
 \item If $p>k+1$,  then there exists a unique, smooth strictly convex hypersurface $\S$ satisfying \eqref{quo-equ}.
        
       \item If $1<p<k+1$, $f$ is even,  
       then there exists an origin-symmetric, smooth strictly convex hypersurface satisfying \eqref{quo-equ}. 
    \end{enumerate}

The primary objective of this paper is to extend Theorem A to the general case $l \geq 0$ of Equation  \eqref{quo-equ}. Firstly, it is crucial to recognize a significant distinction between the cases $l=0$ and $l>0$. In the case of $l=0$, if a solution exists, it automatically exhibits convexity. However, in the case of $l>0$, we need to consider the admissible solutions, which are not necessarily strictly spherical convex. Thanks to the seminal work of Bian-Guan \cite{BG}, we can establish convexity, provided that $f$ satisfies an additional condition, i.e., \eqref{f-convex}.

For $1<p<k+1$, it is well-established that the evenness assumption in Theorem A (3) is indispensable; without it, the existence of a smooth solution cannot be guaranteed (see for instance Guan-Xia \cite[Section 5 and Proposition 5.1]{GX} or Guan-Lin \cite[Theorem 3 and Section 4]{GL}). However, unlike the case $1<p<k+1$, where the evenness assumption is necessary, for the case $p\geq k+1$, it often appears redundant, as demonstrated in Guan-Lin \cite[Theorem 2]{GL} and Hu-Ma-Shen \cite[Theorem 1]{HMS2004}, among others. Therefore, it is natural to ask whether the assertion in Theorem A (1) is still true without the evenness condition required in Theorem A (1). Our first result in this paper provides an affirmative answer to this question, not only for the case $l=0$ but also for the general case $l\geq 0$.

\begin{thm}\label{thm_new1}
Let $p=k-l+1$ with $0\leq l< k <n$. For any positive  smooth function $f$  on $\SS^{n}$, then there exists a  unique  pair $(u,\tau)$, a positive, smooth and $(n-l)$-admissible function $u$ on $\SS^{n}$  and a positive constant $\tau$ satisfying the following  eigenvalue problem
        \begin{eqnarray}\label{critical-u}
            \frac{H_{n-l}(A)}{H_{n-k}(A)}=\tau  u^{k-l} f^{-1}.
        \end{eqnarray} 
The uniqueness is up to a dilation.
\end{thm}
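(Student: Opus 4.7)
I would obtain the pair $(u,\tau)$ as the limit, as $p\to (k-l+1)^+$, of solutions $u_p$ to the super-critical problem
\[
\frac{H_{n-l}(A_p)}{H_{n-k}(A_p)}=u_p^{p-1}f^{-1}, \qquad p>k-l+1,
\]
whose unconditional existence and uniqueness for fixed super-critical $p$ are guaranteed by the super-critical part of this paper. The appearance of the eigenvalue $\tau$ in \eqref{critical-u} is forced by dilation invariance: the left-hand side is $(k-l)$-homogeneous in $u$ under $u\mapsto\lambda u$, matching exactly the power $u^{k-l}$ on the right.

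\textbf{A priori estimates.} Normalize $\tilde u_p := u_p/M_p$ with $M_p:=\max_{\SS^n}u_p$, and put $\tau_p:=M_p^{p-1-(k-l)}$, so that
\[
\frac{H_{n-l}(\tilde A_p)}{H_{n-k}(\tilde A_p)} = \tau_p\,\tilde u_p^{p-1}f^{-1},\qquad \max_{\SS^n}\tilde u_p=1.
\]
At the maximum point $\bar x$ of $u_p$ one has $A_p\le u_p\sigma$, so the monotonicity of $H_{n-l}/H_{n-k}$ on the admissible cone yields $M_p^{k-l}\ge M_p^{p-1}f^{-1}(\bar x)$, i.e.\ $\tau_p\le\max f$; symmetrically, the min-point argument gives $\tau_p\ge\min f$. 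The uniform upper bound $\tilde u_p\le 1$ is by construction. The uniform \emph{positive lower bound} on $\tilde u_p$ is the crucial estimate: the naive min-point inequality only yields $\min\tilde u_p\ge(\min f/\max f)^{1/(p-1-(k-l))}$, which collapses in the critical limit. To sidestep this, I would invoke the weighted gradient estimate for the admissible Hessian quotient operator -- one of the main technical tools advertised in the introduction -- which bounds $|\nabla\log\tilde u_p|$ in terms of $\sup\tilde u_p$ and $f$; integrating this along minimizing geodesics from the maximum point yields a Harnack-type bound $\min\tilde u_p\ge c(n,k,l,f)>0$ uniform in $p$. Once $\tilde u_p$ is uniformly pinched between two positive constants, the admissibility and concavity structure of the operator and the standard second-order maximum-principle estimates yield uniform $C^2$ bounds, and Evans-Krylov plus Schauder promote these to uniform $C^{3,\alpha}$ bounds.

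\textbf{Passage to the limit and uniqueness.} Subsequential convergence $\tilde u_p\to u$ in $C^3$ and $\tau_p\to\tau\in[\min f,\max f]$, together with the uniform limit $\tilde u_p^{p-1}\to u^{k-l}$, produces a positive, smooth, $(n-l)$-admissible solution $(u,\tau)$ of \eqref{critical-u}. For uniqueness up to scaling, let $(u_1,\tau_1),(u_2,\tau_2)$ be two solutions and set $s_0:=\inf\{s>0:su_1\ge u_2\}$. Equality $s_0 u_1=u_2$ is attained at some $x_0$, and the minimality of $s_0 u_1 - u_2$ at $x_0$ together with the scaling identity $A_{s_0 u_1}=s_0 A_{u_1}$ and the monotonicity of $H_{n-l}/H_{n-k}$ gives $\tau_1\ge\tau_2$; swapping the roles forces $\tau_1=\tau_2$. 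The strong maximum principle applied to the linearization of the concave operator $(H_{n-l}/H_{n-k})^{1/(k-l)}$ at the non-negative difference $s_0u_1-u_2$ then forces $u_2\equiv s_0 u_1$.

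\textbf{Main obstacle.} The principal technical hurdle is the uniform positive $C^0$ lower bound on $\tilde u_p$ as $p\to(k-l+1)^+$, since the elementary min-point trick is degenerate at the critical exponent and the naive bound tends to zero; genuine use of the weighted gradient estimate for the Hessian quotient operator is what makes the argument close.
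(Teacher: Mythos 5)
Your proposal is correct and follows essentially the same route as the paper: approximate by the supercritical problem (whose solvability is Theorem \ref{thm_new2}), normalize, use the logarithmic gradient estimate of Lemma \ref{gradient} (what you call the weighted gradient estimate -- note the paper reserves that name for the different estimate of Lemma \ref{Lemma-weighted} used in the subcritical case) whose constant is uniform as $p\to(k-l+1)^+$ to get the Harnack-type pinching, then the $C^2$ estimate, Evans--Krylov and Schauder, and pass to the limit, with $\tau$ arising from the normalizing constant. The only differences are cosmetic: you normalize by the maximum rather than the minimum, and you write out the sliding/maximum-principle uniqueness argument that the paper omits with a reference to Guan--Lin and Hu--Ma--Shen.
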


 When $l=0$, the case $p=k+1$ in \eqref{support-equ} or \eqref{critical-u} can be viewed as a principal eigenvalue problem for the prescribed curvature equations. The eigenvalue problem for fully nonlinear operators was studied first by Lions  \cite{Lions86}. See also \cite{Wang1994}, \cite{Geng1995} and \cite{LS}.

Next, we examine the case where $l\geq0$ and $p>k-l+1$. When $l=0$, Theorem A (2) was provided in \cite[Theorem 1.4]{Lee}. Remarkably, this result directly stems from a broader prescribed $k$-Weingarten curvature theorem established by Guan-Ren-Wang \cite[Theorem 1.5]{GWR}. However, in their work \cite[Theorem 1.2]{GWR}, they devised certain smooth positive functions, leading to situations where the general prescribed quotient-type curvature problem lacks a smooth solution. Consequently, we cannot directly apply their overarching result to the case $l>0$. Nonetheless, we can establish the existence result for our quotient Equation  \eqref{support-equ}.

\begin{thm}\label{thm_new2}
Let $p>k-l+1$ with $0\leq l< k <n$. For any positive smooth function $f$ on $\SS^{n}$, then there exists a unique, smooth and  $(n-l)$-admissible solution $u$ of Equation  \eqref{support-equ}.
        
\end{thm}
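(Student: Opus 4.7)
The plan is to prove Theorem~\ref{thm_new2} by the continuity method, reducing the problem to uniform $C^{2,\alpha}$ a priori estimates for $(n-l)$-admissible solutions of \eqref{support-equ}. I would deform the datum along the family $\varphi_{t}=(1-t)c_{0}+t\varphi$ with $c_{0}>0$ chosen so that a round sphere $u\equiv r_{0}$ solves the equation at $t=0$. Openness of the set $T\subset[0,1]$ of solvable parameters follows from the implicit function theorem: the linearization of $F(A)-\varphi u^{p-1}$ at an admissible solution is uniformly elliptic on $\Gamma_{n-l}$, and for a kernel element $v$, writing $w=v/u$ and using Euler's identity $F^{ij}A_{ij}=(k-l)F(A)$, one computes at a maximum of $w$ that $uF^{ij}\nabla_{ij}w=(p-1-(k-l))F(A)\,w$. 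Since $p-1-(k-l)>0$ while $F^{ij}\nabla_{ij}w\le 0$ at the maximum, this forces $w\le 0$; symmetry gives $w\equiv 0$, so the linearization is injective and hence invertible. Closedness of $T$ amounts to the a priori estimates.

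The $C^{0}$ bound rests crucially on the super-criticality $p>k-l+1$. Since $F(A)=H_{n-l}(A)/H_{n-k}(A)$ is homogeneous of degree $k-l$, at an interior maximum of $u$ one has $\nabla^{2}u\le 0$, hence $A\le u\sigma$ and $F(A)\le u^{k-l}$. Comparing with $F(A)=\varphi u^{p-1}$ yields $u^{\,p-1-(k-l)}\le \max_{\mathbb{S}^{n}}\varphi^{-1}$, producing an upper bound; the analogous argument at a minimum gives a positive lower bound. The $C^{1}$ bound then follows from $C^{0}$ together with $(n-l)$-admissibility, or by interpolation once the $C^{2}$ bound is in hand.

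The principal obstacle is the $C^{2}$ estimate, i.e.\ a uniform upper bound on $\lambda_{\max}(A)$ and preservation of $(n-l)$-admissibility (a uniform lower bound on $F(A)$). Following the weighted gradient-estimate strategy announced in the abstract, I would apply the maximum principle to a test function
\begin{equation*}
W(x,\xi)=\log A(\xi,\xi)+\Psi(u)+\tfrac{\alpha}{2}|\nabla u|^{2},\qquad \xi\in T_{x}\mathbb{S}^{n},\ |\xi|=1,
\end{equation*}
with $\Psi$ and $\alpha$ chosen so as to exploit the concavity of $(H_{n-l}/H_{n-k})^{1/(k-l)}$ on $\Gamma_{n-l}$ and to absorb the inhomogeneous contribution of $\varphi u^{p-1}$. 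After differentiating the equation twice, the bad third-order terms are controlled by the concavity, while the zero-order contribution from $u^{p-1}$ enters with the sign favoured by $p-1-(k-l)>0$; together with the weight $\Psi$ this forces the desired bound on $\lambda_{\max}$. Admissibility is automatically preserved, since $F(A)$ diverges on $\partial\Gamma_{n-l}$ while the right-hand side stays bounded, keeping the smallest Garding eigenvalue uniformly away from zero.

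With $C^{2}$ bounds in place the equation is uniformly elliptic on $\Gamma_{n-l}$; concavity of $(H_{n-l}/H_{n-k})^{1/(k-l)}$ then gives $C^{2,\alpha}$ via Evans--Krylov, and Schauder bootstrapping delivers smoothness, closing the continuity method. Uniqueness follows from the same super-critical sign: at a maximum point of $u_{1}/u_{2}$, the inequality $A_{1}\le (u_{1}/u_{2})A_{2}$ combined with the homogeneity of $F$ gives $\varphi u_{1}^{p-1}=F(A_{1})\le (u_{1}/u_{2})^{k-l}\varphi u_{2}^{p-1}$, hence $(u_{1}/u_{2})^{\,p-1-(k-l)}\le 1$ and $u_{1}\le u_{2}$; the reverse inequality follows by symmetry.
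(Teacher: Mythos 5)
Your overall strategy coincides with the paper's: a continuity method along a deformation of $\varphi$, openness via triviality of the kernel of the linearization (the computation $uF^{ij}\nabla_{ij}w=(p-1-(k-l))F(A)w$ at an extremum of $w=v/u$ is exactly the paper's Lemma \ref{lem-inver}), the $C^{0}$ bound and uniqueness from the degree-$(k-l)$ homogeneity of $H_{n-l}/H_{n-k}$ at extrema of $u$ and of $u_{1}/u_{2}$, and Evans--Krylov plus Schauder at the end. Those steps are correct. The genuine gap is the gradient estimate. For $l\geq 1$ an $(n-l)$-admissible solution is not convex ($\Gamma_{n}\subsetneq\Gamma_{n-l}$), so the $C^{1}$ bound does \emph{not} follow from the $C^{0}$ bound ``together with $(n-l)$-admissibility'': membership in $\Gamma_{n-l}$ only gives $\Delta u+nu>0$ and weaker cone conditions, and a uniformly bounded function satisfying $\Delta u\geq -nu$ can still have arbitrarily large gradient. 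Your fallback, ``by interpolation once the $C^{2}$ bound is in hand,'' is circular: the $C^{2}$ estimate for this equation (the paper's Lemma \ref{lem-C2}, and any variant of your $\log A(\xi,\xi)$ test function) takes $\|u\|_{C^{1}}$ as input, because differentiating the right-hand side $\varphi u^{p-1}$ twice produces terms in $|\nabla u|^{2}$ and $\langle\nabla\varphi,\nabla u\rangle$ that must already be controlled. The paper closes this loop with a separate maximum-principle argument for $|\nabla\log u|^{2}$ (Lemma \ref{gradient}), where at an interior maximum one either reads off the bound directly from $G(B)\geq(1+|\nabla\log u|^{2})^{k-l}$ or exploits the ordering of the $G^{ii}$ together with a positive lower bound on $\sum_{i}G^{ii}$ from the Newton--Maclaurin inequalities; some such argument is an essential missing ingredient in your proof.

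Two smaller corrections. Your claim that admissibility is preserved ``since $F(A)$ diverges on $\partial\Gamma_{n-l}$'' has the direction reversed: $H_{n-l}/H_{n-k}$ tends to \emph{zero} on $\partial\Gamma_{n-l}$, and what keeps $A$ uniformly inside the cone is the positive lower bound $\varphi u^{p-1}\geq c>0$ on the right-hand side combined with the upper bound on $|A|$. Also, the largest-eigenvalue test function is heavier than needed here: on $\SS^{n}$ the fourth-order commutator identity converts $F^{ij}(\Delta u+nu)_{ij}$ into $(\Delta u+nu)\sum_{i}F^{ii}$ plus controlled terms, so applying the maximum principle to the plain trace $\sigma_{1}(A)$, together with the concavity of $(H_{n-l}/H_{n-k})^{1/(k-l)}$ and the lower bound on $\sum_{i}F^{ii}$, already yields the full second fundamental form bound since $|\lambda_{i}|\leq C(n)\sigma_{1}(\lambda)$ on $\Gamma_{2}$.
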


Furthermore, we provide a sufficient condition to ensure that the $(n-l)$-admissible solution $u$ of Equation  \eqref{support-equ} obtained in Theorem \ref{thm_new1} and in Theorem \ref{thm_new2} is strictly spherical convex, by using the method of Bian-Guan \cite{BG}.  
\begin{thm}\label{thm_new3} 
The $(n-l)$-admissible solutions $u$ of Equation  \eqref{support-equ} obtained in Theorem \ref{thm_new1} and Theorem \ref{thm_new2} 
are strictly spherical convex, provided that 
$f$ satisfies 
 \begin{eqnarray} \label{f-convex}
     \n^{2} \left(f^{\frac{1}{p+k-l-1}} \right)+f^{\frac{1}{p+k-l-1}}\sigma \geq 0.
 \end{eqnarray}  
    
\end{thm}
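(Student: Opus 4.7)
The plan is to use a continuity argument together with the Bian--Guan constant rank theorem \cite{BG}. Connect $f$ to a positive constant via a smooth path $\{f_t\}_{t\in[0,1]}$ on $\SS^n$ with $f_0\equiv c>0$, $f_1=f$, arranged so that each $f_t$ satisfies \eqref{f-convex} (easily done since constants trivially satisfy it and the condition is preserved under suitable convex combinations). By Theorem \ref{thm_new1} or Theorem \ref{thm_new2}, for each $t$ there is a unique admissible solution $u_t$ of the perturbed equation; at $t=0$, $u_t$ is constant, so $A_0=u_0\sigma>0$. Define
\[
T:=\{t\in[0,1]: A_t=\nabla^2 u_t+u_t\sigma>0 \text{ on } \SS^n\}.
\]
By the a priori estimates established in the proofs of Theorems \ref{thm_new1}--\ref{thm_new2} and standard elliptic regularity, the map $t\mapsto u_t$ is continuous in $C^2$, so $T$ is open and contains $0$. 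The conclusion will follow once $T$ is shown to be closed.

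Rewrite Equation \eqref{support-equ} in the form $F(A_t)=\psi_t$, where $F:=(H_{n-l}/H_{n-k})^{1/(k-l)}$ is concave and $1$-homogeneous on the G\aa rding cone $\Gamma_{n-l}$, and $\psi_t:=(f_t^{-1}u_t^{p-1})^{1/(k-l)}$. The Bian--Guan constant rank theorem applies provided the right-hand side satisfies an appropriate spherical-convexity hypothesis. Observe that
\[
\psi_t^{-(k-l)/(p+k-l-1)}=f_t^{1/(p+k-l-1)}\,u_t^{-(p-1)/(p+k-l-1)},
\]
so a direct computation using $A_t\geq 0$, together with condition \eqref{f-convex}, yields
\[
\nabla^2\!\bigl(\psi_t^{-(k-l)/(p+k-l-1)}\bigr)+\psi_t^{-(k-l)/(p+k-l-1)}\,\sigma\geq 0.
\]
The specific exponent $p+k-l-1$ in \eqref{f-convex} is chosen precisely so that the contributions of the $u^{p-1}$ weight (via $\nabla^2 u+u\sigma=A\geq 0$) and the $f$ factor (via \eqref{f-convex}) combine to give this inequality.

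For the closedness of $T$, let $t_n\to t^*\in[0,1]$ with $t_n\in T$. Compactness yields $u_{t_n}\to u_{t^*}$ smoothly and $A_{t^*}\geq 0$. Applying the Bian--Guan constant rank theorem to $A_{t^*}$, which satisfies the structural hypothesis by the previous paragraph, one concludes that $A_{t^*}$ has constant rank $r\leq n$ on $\SS^n$. If $r=n$, then $A_{t^*}>0$ and $t^*\in T$. The case $r<n$ is excluded as follows: the null distribution of $A_{t^*}$ becomes a smooth rank-$(n-r)$ sub-bundle of $T\SS^n$ along which $u_{t^*}$ is, in a precise sense, affine; following the analysis in \cite{BG} this is incompatible with the positivity $u_{t^*}>0$ and the admissibility of $u_{t^*}$ in $\Gamma_{n-l}$ on the compact manifold $\SS^n$. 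Hence $T=[0,1]$, and $u=u_1$ is strictly spherically convex.

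The main obstacle is the spherical-convexity computation sketched in the second paragraph: one must commute covariant derivatives on $\SS^n$, substitute the equation, and carefully control the cross terms between $f_t^{1/(p+k-l-1)}$ and $u_t^{-(p-1)/(p+k-l-1)}$; it is exactly this calculation that selects the exponent $1/(p+k-l-1)$ and makes the hypothesis \eqref{f-convex} sharp. A secondary issue is handling the eigenvalue parameter $\tau_t$ from Theorem \ref{thm_new1} along the homotopy, which is addressed by normalizing $u_t$ (say, by prescribing $\int_{\SS^n}u_t=1$) at each $t$, so that $\tau_t$ depends continuously on $t$ together with the a priori estimates.
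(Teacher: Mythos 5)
Your overall architecture --- deform $f$ to a constant along a path preserving \eqref{f-convex}, observe $A_0>0$, and propagate positivity of $A_t$ by openness/closedness using the Bian--Guan constant rank theorem --- is exactly the paper's strategy (the paper's Theorem \ref{thm convex} is your closedness step, and the family $\varphi_t$ from \eqref{t-equ} is your path). The genuine gap is in the central verification. You claim that $A_t\ge 0$ together with \eqref{f-convex} yields the \emph{global} inequality $\n^2\bigl(\psi_t^{-(k-l)/(p+k-l-1)}\bigr)+\psi_t^{-(k-l)/(p+k-l-1)}\sigma\ge 0$, where $\psi_t^{-(k-l)/(p+k-l-1)}=h\,u^{-q}$ with $h=f_t^{1/(p+k-l-1)}$ and $q=\frac{p-1}{p+k-l-1}>0$. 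This is false: a direct computation gives $(hu^{-q})_{\alpha\alpha}+hu^{-q}=-q h u^{-q-1}A_{\alpha\alpha}+(1+q)hu^{-q}+h_{\alpha\alpha}u^{-q}-2qh_\alpha u_\alpha u^{-q-1}+q(q+1)hu^{-q-2}u_\alpha^2$, and the first term is negative and unbounded in directions where $A_{\alpha\alpha}$ is large --- the hypothesis $A\ge0$ works \emph{against} you here. The inequality one actually needs holds only modulo $O(\eta)$ and only in the ``bad'' directions, precisely because there $A_{\alpha\alpha}=O(\eta)$; moreover the cross term $-2qh_\alpha u_\alpha/u$ must be absorbed by a Cauchy--Schwarz into the convexity of $h$, and it is this absorption that selects the exponent $1/(p+k-l-1)$ (see the paper's computation culminating in \eqref{key-inqu-4}). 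Relatedly, because the right-hand side $\psi_t$ depends on the unknown $u_t$, Bian--Guan cannot be invoked as a black box for $F(A)=\psi(x)$: one must rerun the test-function argument with $\eta=\sigma_{m+1}(A)+\sigma_{m+2}(A)/\sigma_{m+1}(A)$, use the inverse concavity of the quotient operator, and insert the above estimate along the bad directions.

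A secondary gap: your exclusion of constant rank $r<n$ (``$u$ is affine along the null distribution, incompatible with positivity'') is not an argument --- a degenerate convex body can have a strictly positive support function. The paper concludes from constant rank $m<n$ on all of $\SS^n$ via the Minkowski formula $\int_{\SS^{n}}uH_{m}(A)\,d\mu=\int_{\SS^{n}}H_{m+1}(A)\,d\mu=0$, which contradicts $u>0$ and $H_m(A)>0$. Finally, for $p=k-l+1$ the paper simply notes that the solution of Theorem \ref{thm_new1} is a $C^4$-limit of the strictly convex solutions with $p>k-l+1$, so $A\ge0$ is inherited and Theorem \ref{thm convex} applies directly; your normalization of $\tau_t$ along the homotopy is unnecessary for this.
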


Finally, we investigate the case for $1<p<k+1-l$, with $1\leq l<k<n$.  As mentioned above, we have to assume the evenness condition in this case.   For this case, we have to restrict ourselves to the class of strictly spherically convex solutions.
When $k=n$, $  1\leq l<n$ and $1<p<n-l+1$, 
Guan-Xia \cite[Corollary~1.1]{GX} established the existence of a strictly spherically convex solution to Equation  \eqref{support-equ} under the assumption that $f$ is even and spherical convex. The key lies in establishing uniformly $C^{0}$ estimate for origin-symmetric and strictly spherical convex solutions, especially a positive lower bound. Their argument relies on a weighted gradient estimate, the Alexandrov-Fenchel inequalities, and an ODE argument, but it appears not to be applicable to the Hessian-quotient type Equation  \eqref{support-equ}.   Recently, Hu-Ivaki \cite{HI} solved the case $l=0, 1\leq k< n$ for \eqref{quo-equ}.  The key ingredient in their argument is to establish the relationship between the sum of the principal radius and the radio of $R^{2}$ to $r$, where $R$ and $r$ denote the outer and inner radii of the convex hypersurface $\S\subset \RR^{n+1}$.  
Using the weighted gradient estimates and  sophisticated analysis, they first established the following key inequality
\begin{eqnarray}\label{hu-1}
    \sigma_{1}(A)\leq C\frac{R^{2}}{r}\left(\frac{r}{R}\right)^{\gamma},
\end{eqnarray}
where $\gamma\in \left(0, \frac{2(p-1)}{k}\right)$.  
Together with a geometric Lemma by Chou-Wang \cite[Lemma~2.2]{CW00}, which states
\begin{eqnarray}\label{hu-2}
    \frac{R^{2}}{r}\leq C(n)\sigma_{1}(A),
\end{eqnarray}
%Combining \eqref{hu-1} and \eqref{hu-2}, 
then Hu-Ivaki \cite{HI} obtained the following non-collapsing estimate 
\begin{eqnarray*}\label{hu-3}
   \frac{R}{r}\leq C. 
\end{eqnarray*}
Due to  $1<p<k+1$, the maximum principle implies a positive upper bound of $r$. Then they obtained the positive lower and upper bounds on the support function and the $C^{2}$ estimates simultaneously.

Thanks to a recent new result of Guan in \cite{G}, we will provide a simpler proof of the result of Hu-Ivaki \cite{HI}, which also works for the general case $l>0$ of Equation  \eqref{support-equ}.

\begin{thm}\label{thm-new4}
    Let $1<p<k+1-l$ with $1\leq l<k<n$. For any positive even smooth function $f$ % is any positive even function 
    satisfying \eqref{f-convex}, then there exists a even, strictly spherically convex solution $u$ solving \eqref{support-equ}. Moreover, there exists a positive constant $C$, such that % Suppose that $u$ is an even, positive, and strictly spherical convex solution to equation \eqref{support-equ}. Then 
    \begin{eqnarray*}
        \|u\|_{C^2(\SS^n)}\leq C,
    \end{eqnarray*}
    where   $C$ depends on $n, k,l, p, \min\limits_{\SS^{n}}f$ and $\|f\|_{C^{2}(\SS^n)}$.
\end{thm}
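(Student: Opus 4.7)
The plan is to prove existence by the continuity method within the class of even, $(n-l)$-admissible functions on $\SS^n$, reducing everything to the uniform $C^2$ bound stated in the theorem. I would deform $\varphi = f^{-1}$ along a smooth family of positive even functions satisfying \eqref{f-convex}, starting from a constant for which the unique even, strictly spherically convex solution is explicit. Openness follows from Fredholm theory for the linearized Hessian-quotient operator on even functions, while closedness is the content of the a priori estimate below.

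The heart of the proof is the $C^0$ estimate $c \leq u \leq C$. The maximum principle applied at the extrema of $u$ yields only the one-sided bounds
\[
r := \min_{\SS^n} u \leq (\min f)^{-1/(k-l+1-p)}, \qquad R := \max_{\SS^n} u \geq (\max f)^{-1/(k-l+1-p)},
\]
using $k-l+1-p>0$. The two-sided bound requires the non-collapsing inequality $R/r \leq C$, and this is the main obstacle: it is the only step where the evenness of $f$ plays an essential (not merely convenient) role, and where \cite{HI} had to invoke an intricate iteration in the Weingarten case $l=0$. I would adapt their strategy to the quotient case $l>0$ using as key analytic input the recent weighted gradient estimate of Guan \cite{G}, which in the Hessian-quotient setting yields a bound of the form $\sigma_1(A) \leq C\, R^{2-\gamma}\, r^{\gamma-1}$ for some $\gamma \in (0, 2(p-1)/(k-l))$ \emph{directly}, bypassing the iteration. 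Combined with the Chou-Wang geometric lemma \eqref{hu-2} for origin-symmetric convex bodies, this forces $R/r \leq C$, and then the one-sided estimates produce the two-sided $C^0$ bound.

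With $C^0$ in hand, $|\n u|$ is controlled by $R+r$ (standard for support functions of origin-symmetric convex bodies), and the $C^2$ bound follows from a standard maximum-principle argument applied to the largest eigenvalue of $A$ for the concave operator $(H_{n-l}/H_{n-k})^{1/(k-l)}$ on the $(n-l)$-admissible cone, with constants depending only on $n,k,l,p,\min f$ and $\|f\|_{C^2}$. Closing the continuity method produces an even, $(n-l)$-admissible solution, and strict spherical convexity is then obtained by the Bian-Guan constant rank theorem \cite{BG} exactly as in the proof of Theorem \ref{thm_new3}, thanks to the assumption \eqref{f-convex}.
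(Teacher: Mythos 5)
Your outline of the a priori estimates is broadly in the right spirit, but the proposal has a genuine structural gap at the existence step. You close the argument with the continuity method and assert that ``openness follows from Fredholm theory for the linearized Hessian-quotient operator on even functions.'' Fredholm theory only gives index zero; openness requires the kernel of the linearization
\begin{eqnarray*}
\mathcal{L}_{u}h=F^{ij}(h_{ij}+h\delta_{ij})-\frac{p-1}{k-l}\,u^{\frac{p-1}{k-l}-1}\varphi^{\frac{1}{k-l}}h
\end{eqnarray*}
to be trivial at \emph{every} solution along the deformation. The proof of Lemma \ref{lem-inver} uses in an essential way that $1-\frac{p-1}{k-l}<0$, i.e.\ $p>k-l+1$; for $1<p<k-l+1$ the sign flips and that argument breaks down, and triviality of the kernel is not known (this is precisely why the paper abandons the continuity method here and instead runs a degree-theoretic argument in the spirit of \cite{LYY} and \cite{GG02}, where only the degree at the starting point $\varphi_0=1$ must be computed, using the spectrum of the linearization at $u\equiv 1$ restricted to even functions). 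Without either a proof of kernel triviality along the path or a switch to degree theory, your existence argument does not close.

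A secondary point concerns the non-collapsing estimate. You say the weighted gradient estimate of Guan ``yields a bound of the form $\sigma_{1}(A)\leq C R^{2-\gamma}r^{\gamma-1}$ directly,'' to be combined with the Chou--Wang lemma \eqref{hu-2}. That conflates two different routes: the weighted gradient estimate \eqref{weighted-est} is a purely first-order bound on $|\nabla u|$ and cannot by itself control the second-order quantity $\sigma_1(A)$; the bound \eqref{hu-1} is exactly the hard iteration in \cite{HI} that the present approach is designed to avoid. The correct deduction (Lemma \ref{lemm-non-collosing}) goes from \eqref{weighted-est} to $R/r\leq C$ without ever touching $\sigma_1(A)$: one uses evenness and John's lemma to sandwich the body between an origin-centered ellipsoid $E$ and $nE$, and integrates the weighted gradient bound for $u^{(2-\beta)/2}$ along a great-circle slice through the longest and shortest axes. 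Your $C^0$, $C^1$ and $C^2$ steps and the final appeal to the constant rank theorem for convexity are otherwise consistent with the paper.
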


 As a direct consequence of Theorem \ref{thm_new3} and Theorem \ref{thm-new4}, we have the following existence result for the prescribed $L_p$ quotient curvature problem \eqref{quo-equ}.
\begin{thm}\label{thm-quo-curv-prob}
    Let  $0\leq l< k <n$
    and let $f$ be a positive smooth function on $\SS^n$ satisfying 
    \eqref{f-convex}.% is a smooth, positive function on $\SS^{n}$.% We have
    \begin{enumerate}
        \item If $p=k-l+1$, 
         then there exists a  unique, smooth strictly convex hypersurface $\S$ and  a positive constant $\gamma$ satisfying  %the following eigenvalue problem 
        \begin{eqnarray*}
           \<X, \nu(X)\>^{k-l} H_{k}(\kappa(X))=\gamma f(\nu(X)),\quad \forall ~ X\in\S.
        \end{eqnarray*}
        % The uniqueness is up to a dilation.
 \item If $p>k-l+1$,  then there exists a unique, smooth strictly convex hypersurface $\S$ satisfying \eqref{quo-equ}.
        
       \item If $1<p<k-l+1$, $f$ is even, %an  even function on $\SS^{n}$. 
       then there exists an origin-symmetric, smooth strictly convex hypersurface satisfying \eqref{quo-equ}. 
    \end{enumerate}
\end{thm}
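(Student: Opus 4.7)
The plan is to derive Theorem \ref{thm-quo-curv-prob} as a direct corollary of Theorems \ref{thm_new1}, \ref{thm_new2}, \ref{thm_new3}, and \ref{thm-new4}, combined with the standard support-function correspondence that identifies the hypersurface equation \eqref{quo-equ} with its dual Hessian-quotient equation \eqref{support-equ} on $\SS^n$.

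The correspondence I would invoke is as follows: for any closed strictly convex hypersurface $\Sigma \subset \RR^{n+1}$ enclosing the origin, the support function $u(x) := \max_{X \in \Sigma}\langle X, x\rangle$ is a positive, smooth, strictly spherically convex function on $\SS^n$; the matrix $A = \nabla^2 u + u\sigma$ is positive definite and its eigenvalues are exactly the principal radii $\lambda_i = 1/\kappa_i$; and $\langle X, \nu(X)\rangle = u(\nu(X))$. A direct computation with elementary symmetric polynomials in the $\lambda_i$'s yields the identity $H_k(\kappa)/H_l(\kappa) = H_{n-k}(A)/H_{n-l}(A)$, so \eqref{quo-equ} with datum $f$ is equivalent to \eqref{support-equ} with $\varphi = f^{-1}$. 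Conversely, any strictly spherically convex smooth $u$ on $\SS^n$ reconstructs such a hypersurface as $\Sigma = \{u(x)x + \nabla u(x) : x \in \SS^n\}$.

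For part (1), I would apply Theorem \ref{thm_new1} to obtain the admissible eigenpair $(u,\tau)$ solving \eqref{critical-u}, and then invoke Theorem \ref{thm_new3}, whose hypothesis \eqref{f-convex} is in force, to upgrade $u$ to a strictly spherically convex function. The reconstructed $\Sigma$ is then a smooth, closed, strictly convex hypersurface satisfying $\langle X, \nu\rangle^{k-l} H_k(\kappa) = \gamma f(\nu)$ with $\gamma = 1/\tau$, and uniqueness up to dilation is inherited from Theorem \ref{thm_new1}. Part (2) is handled identically, with Theorem \ref{thm_new2} supplying both existence and uniqueness of the admissible $u$ and Theorem \ref{thm_new3} again providing strict convexity. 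For part (3), Theorem \ref{thm-new4} directly produces an even, strictly spherically convex $u$ solving \eqref{support-equ}, and the reconstructed $\Sigma$ is automatically origin-symmetric since $u$ is even.

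Because every analytical ingredient---existence of admissible solutions, strict convexity via the constant-rank argument of Bian--Guan, and the weighted $C^0$/$C^2$ estimates in the subcritical range---has already been carried out in the earlier theorems, no substantive obstacle remains. The only items requiring brief verification are the identification of the admissible solution from Theorems \ref{thm_new1}--\ref{thm_new2} with the input to Theorem \ref{thm_new3}, and the standard fact that strict spherical convexity of $u$ on $\SS^n$ is sufficient to guarantee that the reconstructed hypersurface is a smooth, closed, strictly convex embedded hypersurface in $\RR^{n+1}$.
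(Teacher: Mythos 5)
Your proposal is correct and follows essentially the same route as the paper, which states Theorem \ref{thm-quo-curv-prob} as a direct consequence of Theorems \ref{thm_new1}--\ref{thm-new4} via the standard support-function dictionary ($\lambda_i=1/\kappa_i$, $H_k(\kappa)/H_l(\kappa)=H_{n-k}(A)/H_{n-l}(A)$, $\langle X,\nu\rangle=u(\nu)$) that you spell out explicitly. The only (cosmetic) point worth flagging is that the displayed equation in part (1) should carry the factor $H_l(\kappa(X))$ in the denominator for general $l\geq 1$, as the translation of \eqref{critical-u} actually yields $\langle X,\nu\rangle^{k-l}H_k(\kappa)/H_l(\kappa)=\gamma f(\nu)$.
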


In the rest of this paper, we assume that $f:\SS^n\to \R$ is always a positive smooth function. We will indicate it when we need the evenness assumption of $f$.

\

\textbf{The rest of the article is structured as follows.} 
In Section \ref{sec2}, we gather some properties of elementary symmetric functions. In Section \ref{sec-3}, we derive a priori estimates for admissible solutions to Equation  \eqref{support-equ} and complete the proof of Theorem \ref{thm_new1} and Theorem \ref{thm_new2}, respectively. Next, we establish a convexity criterion for the solution of Equation  \eqref{support-equ} and complete the proof of Theorem \ref{thm_new3}. In the last section, Section \ref{sec-4}, we establish a priori estimates of the even, strictly spherical convex solutions to Equation  \eqref{support-equ} when $1<p<k-l+1$ and complete the proof of Theorem \ref{thm-new4} using the topological degree theory.

\section{Preliminaries}\label{sec2}
In this section, we collect some well-known properties of elementary symmetric functions.

\begin{defn}
    Let $A=\{a_{ij}\}$ be an $n\times n$ symmetric matrix,    
    \begin{eqnarray}\label{k-ele}
        \sigma_{k}(A)=\sigma_{k}(\lambda(A))=\sum\limits_{1\leq i_{1}<i_{2}\cdots< i_{k}\leq n}\lambda_{i_{1}}\lambda_{i_{2}}\cdots \lambda_{i_{k}},
    \end{eqnarray}
    where $\l:=\lambda(A)=(\lambda_{1}(A), \lambda_{2}(A), \cdots, \lambda_{n}(A))$ is the  set of eigenvalues of  $A$. %Alternatively, \eqref{k-ele} is also the sum of its $k\times k$ principal minors.

\end{defn}

We use the convention that $\sigma_0=1$ and $\sigma_k =0$ for $k>n$. Let $H_k(\lambda)$ be the normalization of %the $k$-th elementary symmetric function 
$\sigma_{k}(\lambda)$ given by \begin{eqnarray*}
    H_k(\lambda):=\frac{1}{\binom{n}{k}}\s_k(\lambda).
\end{eqnarray*} Denote  $\sigma _k (\lambda \left| i \right.)$ the symmetric
	function with $\lambda_i = 0$ and $\sigma _k (\lambda \left| ij \right.)$ the symmetric function with $\lambda_i =\lambda_j = 0$.  Recall that the  G{\aa}rding cone is defined as
\begin{eqnarray}\label{2.4}
\Gamma_k := \{ \lambda  \in \mathbb{R}^n :\sigma _i (\lambda ) > 0,~~\forall 1 \le i \le k\}.
\end{eqnarray} 

\begin{defn}\label{admissble sol}
     A function $u\in C^2(\SS^n)$ of Equation  \eqref{support-equ} is called $k$-admissible solution if $$A=\n^2 u(x)+u(x)\s\in \Gamma_{k}$$ for all point $x\in\SS^n.$
     In particular, if $A\in \G_n$, we call $u$ strictly spherically convex.
\end{defn}
\begin{lem}\label{prop2.1}Let $\lambda=(\lambda_1,\cdots,\lambda_n)\in\mathbb{R}^n$ and $k
=1, \ldots, n$. Then
	\begin{enumerate}
\item $ \sigma_k(\lambda)=\sigma_k(\lambda|i)+\lambda_i\sigma_{k-1}(\lambda|i),  \quad  \forall 1\leq i \leq n.$
\item $ \sum\limits_{i = 1}^n {\sigma_{k}(\lambda|i)}=(n-k)\sigma_k(\lambda).$
\item $\sum\limits_{i = 1}^n {\lambda_i \sigma_{k-1}(\lambda|i)}=k\sigma_k(\lambda)$.
\item $\sum\limits_{i = 1}^n {\lambda_i^2 \sigma_{k-1}(\lambda|i)}=\s_1(\lambda)\sigma_k(\lambda)-(k+1)\s_{k+1}(\lambda)$.
%\item If $\lambda \in \Gamma_k$ and $\lambda_1 \geq  \lambda_2 \geq \cdots \geq \lambda_n$, then
%\begin{eqnarray*} %\label{2.5}
%\sigma_{k-1} (\lambda|n) \geq\sigma_{k-1} (\lambda|n-1) \geq \cdots \geq \sigma_{k-1} (\lambda|1) >0.
%\end{eqnarray*}
	\end{enumerate}
\end{lem}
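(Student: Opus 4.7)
\textbf{Proof plan for Lemma \ref{prop2.1}.} All four identities are elementary combinatorial facts about $\sigma_k$ viewed as the $k$th elementary symmetric polynomial in $\lambda_1,\dots,\lambda_n$. My plan is to prove (1) by a direct case split on the summation indices, and then derive (2), (3), (4) from (1) by summing and by the standard ``count with multiplicity'' trick.

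For (1), I would fix $i$ and partition the index sets $\{i_1<\cdots<i_k\}\subset\{1,\dots,n\}$ according to whether $i$ appears among them. The subsets not containing $i$ are exactly the $k$-subsets of $\{1,\dots,n\}\setminus\{i\}$, contributing $\sigma_k(\lambda|i)$, while the subsets containing $i$ contribute $\lambda_i$ times the $(k-1)$-subsets of $\{1,\dots,n\}\setminus\{i\}$, yielding $\lambda_i\sigma_{k-1}(\lambda|i)$. For (2), I would count how many times a fixed monomial $\lambda_{i_1}\cdots\lambda_{i_k}$ appears in $\sum_i\sigma_k(\lambda|i)$: it appears for every $i\notin\{i_1,\dots,i_k\}$, and there are exactly $n-k$ such indices, giving the factor $n-k$. (Alternatively, summing (1) over $i$ and using (3) yields the same conclusion.) For (3), the same counting idea works: in $\sum_i\lambda_i\sigma_{k-1}(\lambda|i)$, a monomial $\lambda_{i_1}\cdots\lambda_{i_k}$ is produced exactly when $i$ equals one of $i_1,\dots,i_k$, so it appears $k$ times.

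For (4), I would combine (1) and (3). Using (1) in the form $\sigma_k(\lambda|i)=\sigma_k(\lambda)-\lambda_i\sigma_{k-1}(\lambda|i)$ and multiplying by $\lambda_i$, then summing over $i$:
\begin{equation*}
\sum_{i=1}^n\lambda_i\sigma_k(\lambda|i)=\sigma_1(\lambda)\sigma_k(\lambda)-\sum_{i=1}^n\lambda_i^2\sigma_{k-1}(\lambda|i).
\end{equation*}
On the other hand, applying (3) with $k$ replaced by $k+1$ and recognizing $\sum_i\lambda_i\sigma_k(\lambda|i)=(k+1)\sigma_{k+1}(\lambda)$, I rearrange to obtain identity (4).

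There is no real obstacle: these are purely combinatorial manipulations, and the only thing to be careful about is bookkeeping the multiplicities and not confusing $\sigma_k(\lambda|i)$ (which omits $\lambda_i$) with $\sigma_k(\lambda)$. The whole lemma is recorded here only as a reference for Section \ref{sec-3}, so a short, self-contained combinatorial argument suffices.
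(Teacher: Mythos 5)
Your proof is correct, and the paper itself gives no argument for Lemma \ref{prop2.1} — it simply cites \cite[Lemma~2.10]{Spruck}; your index-counting derivation of (1)--(3) and the deduction of (4) from (1) and (3) (with $k$ shifted to $k+1$) is exactly the standard argument found in that reference.
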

We denote $\sigma _k(A\left|
i \right.)$ the symmetric function with $A$ deleting the $i$-row and
$i$-column and $\sigma _k (A \left| ij \right.)$ the symmetric
function with $A$ deleting the $i,j$-rows and $i,j$-columns. 
\begin{lem}\label{prop2.2}
Suppose that  $A=\{A_{ij}\}$ is diagonal, and $k$ is a positive integer,
then
\begin{eqnarray*}
\sigma_{k-1}^{ij}(A)= \begin{cases}
\sigma _{k- 1} (A\left| i \right.), &\text{if } i = j, \\
0, &\text{if } i \ne j,
\end{cases}
\end{eqnarray*}
where $\sigma_{k-1}^{ij}(A)=\frac{{\partial \sigma _k (A)}} {{\partial A_{ij} }}$.
\end{lem}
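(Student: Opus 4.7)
The plan is a direct computation based on the expansion of $\sigma_k(A)$ as a sum of principal $k\times k$ minors. First I would extend the definition in \eqref{k-ele} from a function of eigenvalues to a polynomial in the entries of $A$ via the classical identity
\begin{equation*}
\sigma_k(A) = \sum_{|I|=k} \det\bigl(A[I|I]\bigr),
\end{equation*}
where $I$ ranges over the $k$-element subsets of $\{1,\ldots,n\}$ and $A[I|I]$ denotes the corresponding principal submatrix. (Both representations agree, since the right-hand side is the $k$-th elementary symmetric function of the eigenvalues of $A$ by the characteristic polynomial.) Differentiating term by term, only those summands with $i,j\in I$ contribute, and for those, expanding along the row indexed by $i$ gives
\begin{equation*}
\frac{\partial \det\bigl(A[I|I]\bigr)}{\partial A_{ij}} = (-1)^{p+q}\,\det\bigl(A[I\setminus\{i\}\,|\,I\setminus\{j\}]\bigr),
\end{equation*}
where $p,q$ denote the positions of $i,j$ within the ordered index set $I$.

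Next I would specialize to the diagonal case $A_{ij}=\lambda_i\delta_{ij}$. If $i\neq j$, the submatrix $A[I\setminus\{i\}\,|\,I\setminus\{j\}]$ still contains the column originally indexed by $i$ (we removed column $j$, not column $i$), yet every entry of that column is zero, since the only nonzero entry $A_{ii}$ was deleted together with row $i$. Hence the determinant vanishes and $\partial\sigma_k/\partial A_{ij}=0$. If instead $i=j\in I$, the submatrix $A[I\setminus\{i\}\,|\,I\setminus\{i\}]$ is itself diagonal with entries $\{\lambda_l : l\in I\setminus\{i\}\}$, and its determinant equals $\prod_{l\in I\setminus\{i\}}\lambda_l$. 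Summing over all $k$-element subsets $I$ that contain $i$ yields
\begin{equation*}
\sigma_{k-1}^{ii}(A)=\frac{\partial\sigma_k}{\partial A_{ii}}(A)=\sum_{\substack{|I|=k\\ i\in I}}\prod_{l\in I\setminus\{i\}}\lambda_l = \sigma_{k-1}(\lambda|i)=\sigma_{k-1}(A|i),
\end{equation*}
which is precisely the claim.

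I do not foresee any real obstacle: the identification of $\sigma_k(A)$ with the sum of principal minors is classical, and the remainder is a routine cofactor manipulation. The only point worth emphasizing is that the partial derivative $\partial/\partial A_{ij}$ is taken with respect to $A$ as a general square matrix, treating the $n^2$ entries as independent variables; this is the standard convention consistent with the usage of $\sigma_{k-1}^{ij}$ elsewhere in the paper.
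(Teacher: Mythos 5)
Your proof is correct and is essentially the standard argument the paper implicitly relies on (the paper itself gives no proof of this lemma, citing Spruck's notes instead): expand $\sigma_k(A)$ as the sum of principal $k\times k$ minors, differentiate by cofactor expansion, and specialize to the diagonal case, where the off-diagonal minors acquire a zero column and the diagonal ones sum to $\sigma_{k-1}(A|i)$. Your closing remark that $\partial/\partial A_{ij}$ treats the $n^2$ entries as independent variables is exactly the convention consistent with the paper's use of $\sigma_{k-1}^{ij}$ and $F^{ij}$ throughout.
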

\begin{lem}\label{pro-2.3}
    The following three properties hold.
    \begin{enumerate}
        \item  For $\lambda \in \G_k$ and $k > l \geq 0$, $ r > s \geq 0$, $k \geq r$, $l \geq s$, there holds %the generalized Newton-Maclaurin inequality is 
\begin{eqnarray*} \label{1.2.6}
\left(\frac{H_{k}(\lambda)}{H_{l}(\lambda)}\right)^{\frac{1}{k-l}}\leq \left(\frac{H_{r}(\lambda)}{H_{s}(\lambda)}\right)^{\frac{1}{r-s}},
\end{eqnarray*}
with equality holds if and only if $\lambda_1 = \lambda_2 = \cdots =\lambda_n >0$.

\item For $0\le l<k\le n$, $\left(\frac{H_k(\lambda)}{H_{l}(\lambda)}\right)^{\frac{1}{k-l}}$ is a concave function for $\l\in\Gamma_
k$.
\item For $0\leq l<k\leq n$, denote  $F(\lambda):=\left(\frac{H_{k}(\lambda)}{H_{l}(\lambda)}\right)^{\frac{1}{k-l}}$. %Assume that $\lambda=(\lambda_{1},\cdots, \lambda_{n})$ satisfies 
If $\lambda_{1}\geq \lambda_{2}\geq\cdots\geq \lambda_{n}$, there holds 
    \begin{eqnarray*}
      \frac{\partial F(\lambda)}{\partial \lambda_{1}}\leq \frac{\partial F(\lambda)}{\partial\lambda_{2}}\leq \cdots\leq \frac{\partial F(\lambda)}{\partial\lambda_{n}}.  
    \end{eqnarray*}
    \end{enumerate}
\end{lem}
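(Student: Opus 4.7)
The plan is to dispatch the three assertions in turn; each is a classical fact about the G{\aa}rding cone, and I would follow the standard arguments, using the identities of Lemma \ref{prop2.1} throughout.

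For part (1), my first step is to establish the base Newton inequality $H_k(\lambda)^2 \geq H_{k-1}(\lambda)\, H_{k+1}(\lambda)$ on $\Gamma_k$. When all $\lambda_i$ are real, this is the log-concavity of the coefficients of the real-rooted polynomial $\prod(1+t\lambda_i)$, which follows from Rolle; it extends to $\Gamma_k$ by a standard continuation argument using the convexity of the cone. This forces the ratio sequence $a_j := H_j(\lambda)/H_{j-1}(\lambda)$ to be non-increasing in $j$ on $\Gamma_k$. Since $(H_k/H_l)^{1/(k-l)}$ is exactly the geometric mean of $a_{l+1},\dots,a_k$, the claimed monotonicity reduces to the elementary fact that for a positive non-increasing sequence, the geometric mean over a window decreases whenever either endpoint is shifted to the right. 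I would verify this by the two elementary moves $(l,k)\mapsto(l,k+1)$ and $(l,k)\mapsto(l+1,k)$ separately, using the bounds $a_{k+1}^{k-l} \leq a_{l+1}\cdots a_k$ and $a_{l+1}\cdots a_k \leq a_{l+1}^{k-l}$ respectively. The equality case in Newton's inequality forces all $\lambda_i$ equal, yielding the stated equality case.

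For part (2), the concavity of $F(\lambda) := (H_k(\lambda)/H_l(\lambda))^{1/(k-l)}$ on $\Gamma_k$ is the most substantial piece and is where I expect the main difficulty. The cleanest route is to invoke G{\aa}rding's theory of hyperbolic polynomials: $\sigma_k$ is hyperbolic with respect to $(1,\dots,1)$ and $\Gamma_k$ is the connected component of positivity containing that vector, hence $\sigma_k^{1/k}$ is concave on $\Gamma_k$. The extension from the power to the quotient is classical; one can compute $d^2 F$ directly in terms of $dH_k, dH_l, d^2H_k, d^2H_l$ and exploit the Newton inequality established in part (1), or simply cite the standard reference for the purposes of this preliminary section.

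For part (3), this is a short corollary of (2) via a symmetry trick. Since $F$ is symmetric and concave on $\Gamma_k$, for any pair $i\neq j$ with $\lambda_i \geq \lambda_j$ the one-variable function $g(t) := F(\dots,\lambda_i - t,\dots,\lambda_j + t,\dots)$ satisfies $g(0) = g(\lambda_i - \lambda_j)$ by symmetry and is concave by (2), so $g$ is non-decreasing on $[0,(\lambda_i-\lambda_j)/2]$ and thus $g'(0) \geq 0$. Writing out $g'(0) = -\partial F/\partial \lambda_i + \partial F/\partial \lambda_j$ gives $\partial F/\partial \lambda_j \geq \partial F/\partial \lambda_i$ whenever $\lambda_i \geq \lambda_j$; applying this to consecutive indices of the ordering $\lambda_1 \geq \cdots \geq \lambda_n$ yields the full monotone chain. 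The main obstacle is really part (2); parts (1) and (3) amount to elementary manipulations once the concavity is in hand.
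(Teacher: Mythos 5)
The paper does not actually prove this lemma: it disposes of Lemmas \ref{prop2.1}--\ref{pro-2.3} with a single citation to Spruck's lecture notes (Lemma~2.10, Theorem~2.11, Lemma~1.5 there). Your proposal therefore supplies more than the paper does, and what you supply is essentially correct. For (1), the reduction of the Maclaurin-type monotonicity to the statement that the ratios $a_j=H_j/H_{j-1}$ form a positive non-increasing sequence, followed by the two window moves on the geometric mean, is the standard argument and is sound; one small inaccuracy is your remark that Newton's inequality "extends to $\Gamma_k$ by a continuation argument" --- Newton's inequalities $H_j^2\geq H_{j-1}H_{j+1}$ hold for \emph{all} real $n$-tuples, and the role of $\Gamma_k$ is only to guarantee $H_j>0$ for $j\leq k$ so that the ratios $a_j$ are defined and positive. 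For (3), the symmetry-plus-concavity trick is exactly the standard proof (note that in fact $g(t)=g(T-t)$ for all $t$ by the transposition symmetry, which is what justifies "non-decreasing on $[0,T/2]$"; in any case $g'(0)\geq (g(T)-g(0))/T=0$ follows from concavity alone). The only caveat concerns (2): you correctly flag it as the substantial point, but the suggested shortcut --- deducing concavity of $(H_k/H_l)^{1/(k-l)}$ from G{\aa}rding's concavity of $\sigma_k^{1/k}$ together with Newton's inequality by "computing $d^2F$" --- is not a routine computation; the known proofs (Caffarelli--Nirenberg--Spruck, or the inductive argument in Spruck's notes) are genuinely more involved. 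Since you also offer to cite the standard reference, which is precisely what the paper does, this is a matter of presentation rather than a gap.
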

For a proof of Lemma \ref{prop2.1} to Lemma \ref{pro-2.3},  one can refer to \cite[Lemma~2.10, Theorem~2.11 and Lemma~1.5]{Spruck} respectively.

\section{Case $p\geq k-l+1$}\label{sec-3}
In this section, we consider the case $p\geq k-l+1$. Firstly, we give the proof of Theorem \ref{thm_new1} and Theorem \ref{thm_new2} for the admissible solution. To deal with the prescribed $L_p$ quotient curvature problem \eqref{quo-equ}, we need to establish the existence of convex solutions. Secondly, we provide a sufficient condition to ensure that the admissible solution obtained in Theorem \ref{thm_new1} and Theorem \ref{thm_new2} is spherical convex.

\subsection{A priori estimates} \ 

We establish a priori estimates for the positive, admissible solution to Equation  \eqref{support-equ}.  A key ingredient is that we can obtain uniform estimates for the modified solution to Equation  \eqref{support-equ}. 
While $p>k-l+1$ ensures the uniform lower bound for the solutions of Equation  \eqref{support-equ}. We present the following theorem in this section as our initial result.
\begin{thm}\label{thm-priori}
    Let $p>k-l+1$ with $0\leq l<k< n$. Assume $u$ is a positive and $(n-l)$-admissible solution to Equation  \eqref{support-equ}. Then for each $\a\in (0,1)$,  there exist some constant $C$, depending on $n,k,l,p$, $\min\limits_{\SS^{n}}\varphi$, %$\max\limits_{\SS^{n}}f$ 
    and $\|\varphi\|_{C^{3}(\SS^{n})}$, such that
   \begin{eqnarray}\label{sch-1}
       \|u\|_{C^{4,\alpha}(\SS^{n})}\leq C.
   \end{eqnarray}
In particular, if $k-l+1< p\leq k-l+2$, the (rescaling) solution 
   \begin{eqnarray}
       \wt u:=\frac u {\min\limits_{\SS^n} u}
   \end{eqnarray}  satisfies
   \begin{eqnarray}\label{sch-2}
       \|\widetilde{u}\|_{C^{4,\alpha}(\SS^{n})}\leq C'
   \end{eqnarray}
   where the constant $C'$ depends only on $n,k,l$, $\min\limits_{\SS^{n}}\varphi$, %$\max\limits_{\SS^{n}}f$ 
   and $\|\varphi\|_{C^{3}(\SS^{n})}$, but not on $p$. 
\end{thm}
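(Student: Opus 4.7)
The plan is to climb the standard a priori estimate ladder $C^0\to C^1\to C^2\to C^{4,\alpha}$, exploiting two structural facts about $F:=(H_{n-l}/H_{n-k})^{1/(k-l)}$ recorded in Lemma~\ref{pro-2.3}: its concavity on the G{\aa}rding cone $\Gamma_{n-l}$ and the positive-definiteness of its linearization $L^{ij}:=\partial F/\partial A_{ij}$ on that cone. These yield the key comparison $F(A)\le F(u\sigma)=u$ whenever $A\le u\sigma$ with $A\in\Gamma_{n-l}$ (since $u\sigma$ has all eigenvalues $u>0$ and $F$ is monotone increasing in each $\lambda_i$), which is the engine of the $C^0$ estimate.

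For \eqref{sch-1}, I would first evaluate the equation at extremal points of $u$. At a maximum $x_{\max}$, the bound $\nabla^2 u\le 0$ gives $A\le u\sigma$, and combining the comparison above with \eqref{support-equ} yields
\[
u(x_{\max})^{p-(k-l+1)}\le(\min\nolimits_{\SS^n}\varphi)^{-1},
\]
which bounds $\max u$ from above since $p-(k-l+1)>0$; symmetrically, $A\ge u\sigma>0$ at a minimum gives $(\min u)^{p-(k-l+1)}\ge(\max_{\SS^n}\varphi)^{-1}$, a positive lower bound. The $C^1$ estimate would come from applying the maximum principle to a logarithmic auxiliary function such as $P=\log|\nabla u|^2-\beta\log u$; after taking the logarithm of the equation, $p$ enters only through the bounded factor $p-1$, so $|\nabla u|/u\le C$ with $C$ independent of $p$ once $p$ is bounded. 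The $C^2$ estimate, the main technical obstacle, would be obtained by applying $L$ to a regularized largest eigenvalue of $A$, differentiating the equation twice, and exploiting the concavity of $F$ together with Cauchy-Schwarz-type rearrangements to absorb the third-order error terms; this gives a pointwise bound on $\lambda_{\max}(A)$ in terms of already-controlled quantities. Combined with the equation this confines $A$ to a compact subset of $\Gamma_{n-l}$, so $F$ becomes uniformly elliptic and concave, and Evans-Krylov followed by standard Schauder bootstrapping delivers \eqref{sch-1}.

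For the rescaled bound \eqref{sch-2}, the function $\tilde u=u/\min u$ satisfies $H_{n-l}(\tilde A)/H_{n-k}(\tilde A)=c_p\varphi\tilde u^{p-1}$ with $c_p:=(\min u)^{p-(k-l+1)}$ and $\tilde A:=\nabla^2\tilde u+\tilde u\sigma$. The $C^0$ bounds above, together with $\min u\le\max u$, confine $c_p$ to the $p$-independent interval $[(\max_{\SS^n}\varphi)^{-1},(\min_{\SS^n}\varphi)^{-1}]$. With $\min\tilde u=1$ fixed by normalization, a uniform upper bound on $\tilde u$ would come not from the max-principle comparison (whose constant blows up as $p\downarrow k-l+1$) but from the gradient estimate: integrating $|\nabla\log\tilde u|\le C$ along a geodesic of $\SS^n$ of length at most $\pi$ gives $\max\tilde u\le e^{C\pi}$. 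This constant $C$ is $p$-independent precisely because $p-1$ stays in the bounded range $(k-l,k-l+1]$ on $p\in(k-l+1,k-l+2]$. The $C^2$ and higher estimates then run verbatim for $\tilde u$ with $p$-independent constants. The delicate point, and the hardest part of the whole theorem, is tracking every constant in the $C^2$ computation carefully to confirm that the only $p$-dependence enters through the bounded factor $p-1$, so that no implicit $p$-dependence sneaks in as $p\downarrow k-l+1$.
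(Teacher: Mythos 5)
Your proposal follows essentially the same route as the paper: the $C^0$ bound at extremal points using $p-(k-l+1)>0$, a logarithmic gradient estimate whose constant depends only on an upper bound for $p-(k-l+1)$, a maximum-principle $C^2$ estimate driven by the concavity of $F=(H_{n-l}/H_{n-k})^{1/(k-l)}$, and Evans--Krylov plus Schauder, with the $p$-independent bound for $\widetilde u$ obtained by integrating the log-gradient estimate rather than from the max-principle comparison. The only cosmetic difference is the second-order test quantity (the paper uses the trace $\Delta u+nu$ rather than a regularized largest eigenvalue, which suffices since all eigenvalues in $\Gamma_{n-l}$ are controlled by $\sigma_1(A)$).
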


Before proving Theorem \ref{thm-priori}, certain preparations are necessary. In the following, for convenience,  when dealing with the tensors and their covariant derivatives on  $\SS^{n}$, we will use a local frame to express tensors with the help of their components and indices appearing after the semi-colon to denote the covariant derivatives. For example, let $\{e_i\}_{i=1}^n$ be an orthonormal frame on $\SS^{n}$,   the expression $u_{ij}$ denotes $\n^2 u(e_i,e_j)$ and   $A_{ijs}:=\n_{e_s}A({e_i, e_j})$ etc. We denote
\begin{eqnarray} \label{F(A)}  
F(A)=\left(\frac{H_{n-l}(A)}{H_{n-k}(A)}\right)^{\frac{1}{k-l}},~~  F^{ij}=\frac{\partial F(A)}{\partial A_{ij}},~~ F^{ij,pq}=\frac{\partial^{2}F(A)}{\partial A_{ij}\partial A_{pq}},
\end{eqnarray}
and 
\begin{eqnarray}\label{F(A) bar}
\overline{F}(A)=\frac{H_{n-l}(A)}{H_{n-k}(A)},~~  \ov{F}^{ij}=\frac{\partial \ov F(A)}{\partial A_{ij}},~~ \bar{F}^{ij,pq}=\frac{\partial^{2} \ov F(A)}{\partial A_{ij}\partial A_{pq}}.
\end{eqnarray}
We use the Einstein summation convention, wherein the indices that appear multiple times are automatically summed over, regardless of whether they are in the upper or lower position, if there is no confusion.

First, we derive the $C^{0}$ estimate for the positive admissible solution to Equation  \eqref{support-equ} for $p>k-l+1$, which is easy and follows from the maximum principle.
\begin{lem}\label{lem-c0}
   Let $p> k-l+1$ with $0\leq l<k<n$. Suppose $u$ is a positive, $(n-l)$-admissible solution to Equation  \eqref{support-equ}. Then 
    \begin{eqnarray}\label{C0-est}
     \min\limits_{ \SS^{n}}\varphi^{-1} \leq  u^{p-(k-l+1)}\leq \max\limits_{ \SS^{n}}\varphi^{-1} .
    \end{eqnarray}
\end{lem}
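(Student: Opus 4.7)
The plan is to apply the maximum principle at the extrema of $u$, combine it with the monotonicity of $\overline F = H_{n-l}/H_{n-k}$ on the admissible cone $\Gamma_{n-l}$, and then use the hypothesis $p > k-l+1$ to invert the resulting power inequality.

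First, at a maximum point $x_{\max}\in\SS^n$ of $u$, the usual necessary conditions $\n u(x_{\max}) = 0$ and $\n^2 u(x_{\max}) \leq 0$ (as a symmetric endomorphism) give
\begin{equation*}
A(x_{\max}) = \n^2 u(x_{\max}) + u(x_{\max})\s \leq u(x_{\max})\s,
\end{equation*}
so every eigenvalue of $A(x_{\max})$ is bounded above by $u(x_{\max})$. Interpolating linearly between $\lambda(A(x_{\max}))$ and $(u(x_{\max}),\dots,u(x_{\max}))$ produces a path that remains in $\Gamma_{n-l}$ (by convexity of the cone) and whose coordinates are monotone non-decreasing in the parameter. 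Since $\overline F$ is non-decreasing in each variable on $\Gamma_{n-l}$ --- a standard property of Hessian-quotient operators on G{\aa}rding cones, compatible with the ordering of partials in Lemma \ref{pro-2.3} --- we obtain
\begin{equation*}
\overline F(A(x_{\max})) \leq \overline F(u(x_{\max})\s) = u(x_{\max})^{k-l},
\end{equation*}
the last equality since $H_j(c,\dots,c) = c^j$ for any $c>0$. Substituting into Equation \eqref{support-equ} yields $u(x_{\max})^{p-(k-l+1)} \leq \varphi^{-1}(x_{\max}) \leq \max_{\SS^n}\varphi^{-1}$, and the strict positivity of the exponent $p-(k-l+1)$ then promotes this to the pointwise bound $u(x)^{p-(k-l+1)} \leq \max_{\SS^n}\varphi^{-1}$ for every $x\in\SS^n$.

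The lower bound is obtained by the mirror argument at a minimum point $x_{\min}$: there $\n^2 u\geq 0$, hence $A(x_{\min})\geq u(x_{\min})\s$, monotonicity reverses to give $\overline F(A(x_{\min})) \geq u(x_{\min})^{k-l}$, and Equation \eqref{support-equ} combined with positivity of the exponent delivers $u^{p-(k-l+1)} \geq \min_{\SS^n}\varphi^{-1}$. There is no real obstacle beyond the monotonicity of $\overline F$ on the G{\aa}rding cone, which is standard; the only place the assumption $p > k-l+1$ is genuinely used is in the final step, where it guarantees that raising the inequality to the power $1/(p-(k-l+1))$ preserves its direction (and that the pointwise bound propagates from the extremum to all of $\SS^n$).
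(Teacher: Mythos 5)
Your proof is correct and follows essentially the same route as the paper's: evaluate the equation at the extrema of $u$, use $A\leq u\sigma$ (resp.\ $A\geq u\sigma$) there together with the monotonicity of the Hessian quotient on the G{\aa}rding cone to compare $\overline F(A)$ with $u^{k-l}$, and then use $p>k-l+1$ to invert the resulting power inequality. The only difference is that you spell out the monotonicity step via the interpolation argument, which the paper leaves implicit.
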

    \begin{proof}
        Assume $u$ attains its minimum value at some point $x_{0}\in\mathbb{S}^{n}$.  Then, at $x_{0}$, there holds
        \begin{eqnarray*}
            \n u=0,  ~~\text{and}~~\n^{2}u\geq 0,
        \end{eqnarray*}
       taking into account of Equation  \eqref{support-equ},
            \begin{eqnarray*}
                u^{k-l}(x_{0})\leq \frac{H_{n-l}(A)}{H_{n-k}(A)}(x_{0})=\varphi(x_{0})u^{p-1}(x_{0}),
            \end{eqnarray*}
     which implies 
     \begin{eqnarray*}
         u^{p-(k-l+1)}(x_{0})\geq \min\limits_{x\in\SS^{n}}\varphi^{-1}(x).
     \end{eqnarray*}
Similarly, there holds  
     \begin{eqnarray*}
         u^{p-(k-l+1)}\leq \max\limits_{x\in\SS^{n}}\varphi^{-1}(x).
     \end{eqnarray*}Hence we complete the proof.
 \end{proof}

Next, we establish the logarithmic gradient estimate for the admissible solution $u$ of Equation  \eqref{support-equ}. This estimate will play a crucial role in investigating the eigenvalue problem, i.e., the case $p=k-l+1$ of Equation  \eqref{support-equ}.   In particular, when $l=0, k=n$ and $l>0,k=n$, similar results were proved in \cite[Section 2]{GL} and \cite[Section 3]{HMS2004} respectively. %Set $p_0=p-(p-l)+1$.
\begin{lem}\label{gradient}
      Let $p\geq k+1-l$ with $0\leq l<k<n$. Suppose  $u$ is a positive, $(n-l)$-admissible solution to Equation  \eqref{support-equ}. Then there exists a constant $C$ depending on $n, k,l$, $\min\limits_{\SS^{n}}\varphi, %\max\limits_{\SS^{n}}f, 
      \| \varphi\|_{C^{1}(\SS^{n})}$ and the upper bound of $p_{0}:=p-(k-l+1)$, such that
      \begin{eqnarray*}
          \max\limits_{\SS^{n}}|\n \log u|\leq C.
      \end{eqnarray*}

\end{lem}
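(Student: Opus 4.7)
The plan is to apply the maximum principle to $P := \tfrac{1}{2}|\nabla v|^{2}$, where $v := \log u$ linearises the scaling of the equation. Using the $(k-l)$-homogeneity of $F(A) = (H_{n-l}(A)/H_{n-k}(A))^{1/(k-l)}$ defined in \eqref{F(A)}, Equation \eqref{support-equ} is equivalent to
$$F(B) = \tilde{\varphi}\, u^{\tilde p}, \quad B := \nabla^{2}v + \nabla v \otimes \nabla v + \sigma,\quad \tilde{\varphi} := \varphi^{1/(k-l)},\quad \tilde p := p_{0}/(k-l)\geq 0,$$
with $B \in \Gamma_{n-l}$ since $u$ is $(n-l)$-admissible; the operator $F^{ij}(B)\partial_{i}\partial_{j}$ is thus elliptic and concave by Lemma \ref{pro-2.3}.

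Suppose $P$ attains its maximum at $x_{0}\in\SS^{n}$. In a local orthonormal frame at $x_{0}$ diagonalising $B$ with eigenvalues $\lambda_{i} := B_{ii}$, the first-order condition $v_{k}v_{ki}=0$ together with $v_{ki} = B_{ki} - v_{k}v_{i} - \delta_{ki}$ forces $\nabla v$ to lie in a single eigenspace of $B$; after rotating within that eigenspace so that $\nabla v = v_{1}e_{1}$, one obtains $v_{1i}=0$ for $i\geq 2$ and $\lambda_{1} = 1 + |\nabla v|^{2}$. The Ricci identity on $\SS^{n}$ gives $P_{ij} = v_{ki}v_{kj} + v_{k}v_{ijk} + v_{i}v_{j} - |\nabla v|^{2}\delta_{ij}$. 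Differentiating $\log F(B) = \log\tilde{\varphi} + \tilde p\, v$ in direction $e_{k}$ and contracting with $v_{k}$ (the first-order condition eliminates the mixed term $F^{ij}v_{k}v_{ik}v_{j}$) produces
$$v_{k}F^{ij}(B)v_{ijk} = F(B)\langle\nabla\log\tilde{\varphi},\nabla v\rangle + \tilde p\, F(B)|\nabla v|^{2}.$$

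Combining these and using the frame identity $F^{ij}v_{i}v_{j} = F^{11}|\nabla v|^{2}$ to cancel the $i=1$ contribution of $|\nabla v|^{2}\sum_{i}F^{ii}$, the condition $F^{ij}(B)P_{ij}\leq 0$ at $x_{0}$ reduces to
$$\sum_{i\geq 2}F^{ii}(B)(\lambda_{i}-1)^{2} + \tilde p\, F(B)|\nabla v|^{2} + F(B)\langle\nabla\log\tilde{\varphi},\nabla v\rangle \leq |\nabla v|^{2}\sum_{i\geq 2}F^{ii}(B).$$
To close the estimate I would expand $\sum_{i\geq 2}F^{ii}(\lambda_{i}-1)^{2}$ using the Euler identity $\sum_{i}F^{ii}(B)\lambda_{i} = F(B)$ and $\lambda_{1} = 1 + |\nabla v|^{2}$. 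Combined with the $C^{0}$ bound of Lemma \ref{lem-c0} (which controls $F(B) = \tilde{\varphi}u^{\tilde p}$ from above and below, with dependence on $\min\varphi$, $\max\varphi$, and the upper bound of $p_{0}$) and the lower estimate $F^{11}\geq c\,F(B)/\lambda_{1}$ arising from $1$-homogeneity and the ordering of the $F^{ii}$ recorded in Lemma \ref{pro-2.3}(3), the inequality forces $|\nabla v|^{2}\leq C$.

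The main obstacle is the borderline eigenvalue case $p = k-l+1$, where $\tilde p = 0$ and the positive gain term $\tilde p F(B)|\nabla v|^{2}$ vanishes. In that case one must extract the bound purely from the good term $\sum_{i\geq 2}F^{ii}(\lambda_{i}-1)^{2}$ together with the concavity of $F$, compensating for the absence of a source-term contribution; this step is analogous to the analyses in Guan-Lin \cite{GL} and Hu-Ma-Shen \cite{HMS2004} in their respective settings.
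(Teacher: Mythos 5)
Your overall strategy (maximum principle applied to $\tfrac12|\n \log u|^2$ after rewriting the equation for $v=\log u$) is the same as the paper's, but the execution has a sign error that derails the conclusion. On $\SS^n$ the Ricci identity gives $v_sv_{sij}=v_sv_{ijs}+|\n v|^2\delta_{ij}-v_iv_j$, not $+v_iv_j-|\n v|^2\delta_{ij}$ as in your formula for $P_{ij}$. With the correct sign the curvature contribution $|\n v|^2\sum_{i\ge 2}F^{ii}$ lands on the \emph{good} side of $F^{ij}P_{ij}\le 0$: after discarding the nonnegative terms $\sum_i F^{ii}v_{ii}^2$ and $\tilde p\,F|\n v|^2$ one is left with $|\n v|^2\sum_{i\ge 2}F^{ii}\le C\,|\n v|$, and the entire proof reduces to bounding $\sum_{i\ge 2}F^{ii}$ from below. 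With your sign that term sits on the bad side, and your closing step would need $\sum_{i\ge2}F^{ii}(\lambda_i-1)^2$ to dominate $|\n v|^2\sum_{i\ge2}F^{ii}$, which is false (take $\lambda_i$ close to $1$ for $i\ge 2$). Your concern about the borderline case $\tilde p=0$ is an artifact of the same error: in the correct computation the term $\tilde p\,F|\n v|^2$ is never used, which is precisely why the lemma holds uniformly down to $p=k-l+1$.

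Even after fixing the sign, the step you sketch to close the argument (Euler identity plus "$F^{11}\ge c\,F(B)/\lambda_1$") is not justified for the quotient operator and is not what is needed; what is needed is a positive lower bound for $\sum_{i\ge 2}F^{ii}$, i.e.\ for the trace \emph{minus} the derivative in the gradient direction. The paper obtains this by a dichotomy: either $B_{ii}>B_{11}=1+v_1^2$ for every $i\ge 2$, in which case monotonicity gives $\varphi e^{p_0v}=G(B)\ge G(B_{11}I)=(1+v_1^2)^{k-l}$ and Lemma \ref{lem-c0} bounds $|\n v|$ at once; or some $B_{i_0i_0}\le B_{11}$ with $i_0\ge 2$, in which case Lemma \ref{pro-2.3}(3) yields $G^{i_0i_0}\ge G^{11}$, hence $\sum_{i\ge 2}G^{ii}\ge\tfrac12\sum_i G^{ii}$, and the full trace is bounded below by a positive constant via Newton--MacLaurin and the $C^0$ estimate. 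You should correct the sign and replace your final paragraph by this dichotomy.
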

\begin{proof}
Define $v:=\log u$, from  Equation  \eqref{support-equ}, it is easy to see that   $v$ satisfies
\begin{eqnarray}\label{v-equ}
   G(B):=\frac{H_{n-l}(B)}{H_{n-k}(B)}=\varphi e^{p_{0}v}, \quad \text{on}~\SS^{n},
\end{eqnarray}
where $B:=(B_{ij})=(v_{ij}+v_{i}v_{j}+\sigma_{ij})$. % and $p_0:=p-(k-l+1)$.
Consider the  auxiliary function
\begin{eqnarray*}
    \psi:=\frac{1}{2}|\n v|^{2}.
\end{eqnarray*}
Assume $\psi$ attains its maximum value at some point $x_{0}\in\SS^{n}$. By choosing an orthonormal frame $\{e_{i}\}_{i=1}^{n}$ at $x_{0}$, such that 
\begin{eqnarray*}
\n v=v_1e_1 ~~ \text{and}\quad \n^{2}v~\text{is diagonal at } x_0.
\end{eqnarray*}
Denote $G^{ij}=\frac{\partial G(B)}{\partial B_{ij}}$, then at $x_0$, it follows that $B_{ij}$ and $G^{ij}$ are also diagonal. % at $x_{0}$.  
In the following, we compute at $x_0$. By the critical and maximal condition, 
%Next, all the calculations will be done at $\zeta_{0}$.
%At $\zeta_{0}$, we have 
\begin{eqnarray}\label{one deri}
    0=\psi_{i}=v_{s}v_{si},
\end{eqnarray}
and 
\begin{eqnarray}\label{sec deri}
\begin{aligned}
 0\geq G^{ij} \psi_{ij}
    =G^{ij}(v_{si}v_{sj}+v_{s}v_{sij}).
    \end{aligned}
\end{eqnarray}

From \eqref{one deri}, we have
\begin{eqnarray}\label{v11}
v_{11}=0.
\end{eqnarray}  By the Ricci identity on $\SS^{n}$,
\begin{eqnarray}\label{Ric-identity}
    v_{sij}=v_{ijs}+v_{s}\delta_{ij}-v_{j}\delta_{si},
\end{eqnarray}
it follows
\begin{eqnarray}\label{diff}
    G^{ij}v_{sij} %G^{ii}(v_{iis}+v_{s}-v_{i}\delta_{si})
    =G^{ij}v_{ijs}+v_{s}\sum\limits_{i\neq s}G^{ii}.
\end{eqnarray}
Differentiating Equation  \eqref{v-equ} once, 
\begin{eqnarray}\label{one-dff}
    G^{ij}v_{ijs}=e^{p_{0}v}(p_{0}v_{s}\varphi+\varphi_{s})-2G^{ij}v_{i}v_{js}.
\end{eqnarray}
Substituting  \eqref{diff} and \eqref{one-dff} into  \eqref{sec deri}, in view of \eqref{v11}, we obtain
\begin{eqnarray}
    0&\geq &G^{ij}\psi_{ij}\notag\\
    &=&G^{ii}v_{ii}^{2}+v_{1}^{2}\sum\limits_{i=2}^{n}G^{ii}-2G^{11}v_{1}^{2}v_{11}+e^{p_{0}v}(p_{0}v_{1}^{2}\varphi+\varphi_{1}v_{1})\notag\\
    &\geq& v_{1}^{2}\sum\limits_{i=2}^{n}G^{ii}-e^{p_{0}v}|\varphi_{1}|v_{1}.\label{key-ine-a}
\end{eqnarray}
Now we \textbf{claim} that there exists some index $i_0\in \{2,\ldots, n\}$ such that 
\begin{eqnarray}\label{B11}
    B_{i_0i_0}\leq B_{11}.
\end{eqnarray}In fact, if the \textbf{claim}  is not true, it follows \begin{eqnarray*}
    e^{p_{0}v}\varphi(x_{0})=G(B)\geq G(B_{11}I)=B_{11}^{k-l}=(1+v_{1}^{2})^{k-l},
\end{eqnarray*}thus we complete the proof.

Under the \textbf{claim} \eqref{B11}, from Lemma \ref{pro-2.3} (3), we have 
\begin{eqnarray}\label{Gii}
    G^{i_{0}i_{0}}\geq G^{11}.
\end{eqnarray}
From Lemma \ref{pro-2.3} (1), we have
\begin{eqnarray*}
    \frac{ H_{n-l}(B)}{H_{n-l-1}(B)} \leq \left(\frac {H_{n-l}(B)}{H_{n-k}(B)}\right)^{\frac 1 {k-l}},
\end{eqnarray*}together with Lemma \ref{prop2.1} (2) and Lemma \ref{lem-c0}, we obtain
\begin{eqnarray}\label{sum Gii}
\sum\limits_{i=1}^{n}G^{ii}&=&\frac{C_{n}^{k}}{C_{n}^{l}}\sum\limits_{i=1}^{n}\frac{\sigma_{n-l-1}(B|i)\sigma_{n-k}(B)-\sigma_{n-l}(B)\sigma_{n-k-1}(B|i)}{\sigma_{n-k}^{2}(B)}\notag \\
&\geq& \frac{C_{n}^{k}}{C_{n}^{l}}\frac{(k-l)(l+1)}{n-l}\frac{\sigma_{n-l-1}(B)}{\sigma_{n-k}(B)}\notag\\
&\geq& C(n, k, l)(\varphi e^{p_{0}v})^{\frac{k-l-1}{k-l}}\geq c_{0}>0,
\end{eqnarray}
where the constant $c_{0}$ depends on $n,k,l,p_0$ and $\min\limits_{\SS^{n}}\varphi$  and $\max\limits_{\SS^{n}}\varphi$. 
Substituting \eqref{Gii} and  \eqref{sum Gii} into \eqref{key-ine-a}, we derive \begin{eqnarray*}
    |v_{1}|\leq C.
\end{eqnarray*}
In conclusion, we complete the proof.
\end{proof}
As a direct consequence of Lemma \ref{gradient}, we have the following estimates for $$\widetilde{u}:=\frac{u}{\min\limits_{\SS^{n}}u}.$$
\begin{cor}\label{coro}
    Let $p> k+1-l$ and $u$ be a positive, $(n-l)$-admissible solution to Equation  \eqref{support-equ}. Then there exists a constant $C$ depending on $n, k,l$, $\min\limits_{\SS^{n}}\varphi$, $\|\varphi\|_{C^{1}(\SS^{n})}$ and the upper bound of $p_{0}$,  such that
    \begin{eqnarray*}
        1\leq \widetilde{u}\leq C,
    \end{eqnarray*}
    and 
    \begin{eqnarray*}
        |\n \widetilde{u}|\leq C.
    \end{eqnarray*}
\end{cor}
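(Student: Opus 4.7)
The corollary is a direct consequence of Lemma \ref{gradient} together with the definition $\widetilde{u}:=u/\min_{\SS^n}u$, so the plan is short. First, the lower bound $\widetilde{u}\geq 1$ is immediate: by construction $\min_{\SS^n}\widetilde{u}=1$.

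Next I would observe that the logarithmic gradient is scale invariant, namely
\begin{eqnarray*}
\n\log\widetilde{u}=\n\log u - \n\log(\min_{\SS^n}u)=\n\log u,
\end{eqnarray*}
so Lemma \ref{gradient} (which applies to the $(n-l)$-admissible solution $u$ and whose hypothesis $p\geq k-l+1$ is implied by $p>k-l+1$) yields $|\n\log\widetilde{u}|\leq C$ for a constant $C$ depending only on the quantities listed in the statement.

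From here I would use the bounded diameter of $\SS^n$: for any $x\in\SS^n$, connecting $x$ to a minimum point $x_0$ of $\widetilde{u}$ by a minimizing geodesic of length at most $\pi$ and integrating $|\n\log\widetilde{u}|\leq C$ along it gives
\begin{eqnarray*}
\log\widetilde{u}(x)-\log\widetilde{u}(x_0)=\log\widetilde{u}(x)\leq C\pi,
\end{eqnarray*}
so $\widetilde{u}\leq e^{C\pi}$, which is the desired upper bound. Finally,
\begin{eqnarray*}
|\n\widetilde{u}|=\widetilde{u}\,|\n\log\widetilde{u}|\leq e^{C\pi}\cdot C,
\end{eqnarray*}
which gives the gradient estimate with a constant depending on the same quantities as in Lemma \ref{gradient}. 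Since every step is an algebraic manipulation of the conclusion of Lemma \ref{gradient} together with the compactness of $\SS^n$, I do not anticipate any obstacle; the whole corollary is essentially a rephrasing of the logarithmic gradient estimate in terms of the normalized solution $\widetilde{u}$.
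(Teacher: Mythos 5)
Your argument is correct and is exactly the intended deduction: the paper states this corollary as a direct consequence of Lemma \ref{gradient} without writing out details, and your steps (scale invariance of $\nabla\log u$, integration along a geodesic from the minimum point, then $|\nabla\widetilde{u}|=\widetilde{u}\,|\nabla\log\widetilde{u}|$) are the standard way to fill them in. No gaps.
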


At the end of this section, we derive the $C^{2}$ estimates for the positive, $(n-l)$-admissible solution to Equation  \eqref{support-equ} when $p>k-l+1$. To facilitate the study of the corresponding eigenvalue problem, we  derive the $C^{2}$ estimates for the function $\widetilde{u}$ which satisfies the following equation
\begin{eqnarray}\label{wu equ}
    \frac{H_{n-l}( A)}{H_{n-k}(  A)}=\widetilde{\varphi}  u^{p-1},\quad \text{on}~\SS^{n},
\end{eqnarray}
for some  positive function  $\wt\varphi$ defining on $\SS^n$. By Lemma \ref{lem-c0}, we know that $\wt \varphi$ satisfies 
\begin{eqnarray}\label{f condi}
     \frac{\min\limits_{\SS^{n}}\varphi}{\max\limits_{\SS^{n}}\varphi}\leq \widetilde{\varphi}(x)\leq \frac{\max\limits_{\SS^{n}}\varphi}{\min\limits_{\SS^{n}}\varphi}\quad ~\forall x \in\SS^{n},\quad\text{and}~ \|\widetilde{\varphi}\|_{C^{2}(\SS^{n})}\leq C\|\varphi\|_{C^{2}(\SS^{n})}.
\end{eqnarray}

\begin{lem}\label{lem-C2}
Let $p\geq 1$  and $0\leq l< k<n$. Suppose $u$ is a positive, $(n-l)$-admissible solution to Equation  \eqref{wu equ}. If  there exists a positive constant $C_{0}$ which  depends on  $n, k,l$, $\min\limits_{\SS^{n}}\varphi$,
$\| \varphi\|_{C^{1}(\SS^{n})}$ and the upper bound of $p_{0}$, such that $\|u\|_{C^{1}(\SS^{n})}\leq C_{0}$, then  it holds
\begin{eqnarray*}
\Delta u+nu\leq C.
\end{eqnarray*}
The constant $C$  depends on $n,k,l, C_{0}$ and $\|\varphi\|_{C^{2}(\SS^{n})}$.
\end{lem}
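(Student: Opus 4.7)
The plan is to apply the maximum principle to an auxiliary function that dominates $\Delta u + nu$. Since $\Delta u + nu = \tr(A) \leq n\lambda_1(A)$ with $A = \n^2 u + u\sigma$, it suffices to produce an a priori upper bound on the largest eigenvalue $\lambda_1(A)$. I would consider the test function
$$\Phi(x) = \log \lambda_1(A(x)) + \frac{\alpha}{2}|\n u|^2(x),$$
for a small constant $\alpha > 0$ to be fixed later. Suppose $\Phi$ attains its maximum at $x_0 \in \SS^n$, and choose a local orthonormal frame at $x_0$ diagonalizing $A$, labeled so that $\lambda_1 \geq \lambda_2 \geq \cdots \geq \lambda_n$; by a standard perturbation I may assume $\lambda_1$ is simple. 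Writing $F := (H_{n-l}/H_{n-k})^{1/(k-l)}$, equation \eqref{wu equ} reads $F(A)^{k-l} = \wt\varphi u^{p-1}$, and $F$ is concave on $\Gamma_{n-l}$ by Lemma \ref{pro-2.3}(2). At $x_0$ one has $u_{ij} = (\lambda_i - u)\delta_{ij}$, and the first-order condition $\Phi_i = 0$ yields $A_{11;i} = -\alpha \lambda_1 u_i(\lambda_i - u)$.

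Next, I would expand $F^{ii}\Phi_{ii} \leq 0$ using the eigenvalue-perturbation identity
$$(\log\lambda_1)_{ii} = \frac{A_{11;ii}}{\lambda_1} - \frac{(A_{11;i})^2}{\lambda_1^2} + \sum_{j>1}\frac{2(A_{1j;i})^2}{\lambda_1(\lambda_1 - \lambda_j)},$$
whose last sum is nonnegative and is discarded. The Codazzi-type symmetry $A_{ij;k} = A_{ik;j}$ on $\SS^n$ (a direct consequence of $\n\sigma = 0$ and the Ricci identity), combined with one further Ricci commutation, gives the sphere-curvature identity $A_{11;ii} = A_{ii;11} + \lambda_1 - \lambda_i$; invoking the Euler identity $\sum_i F^{ii}\lambda_i = F$ (since $F$ is $1$-homogeneous) then produces $F^{ii}A_{11;ii} = F^{ii}A_{ii;11} + \lambda_1 \sum_i F^{ii} - F$. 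Differentiating the equation twice in $e_1$ and invoking $F^{ij,pq}A_{ij;1}A_{pq;1} \leq 0$ by the concavity of $F$, one has $F^{ii}A_{ii;11} \geq g_{11}$ with $g := (\wt\varphi u^{p-1})^{1/(k-l)}$; expanding $g_{11}$ in terms of $\lambda_1$, $|\n u|$ and derivatives of $\wt\varphi$ gives a controlled bound. The cross term $\alpha F^{ii}u_s u_{sii}$ is handled via the once-differentiated equation $\sum_i F^{ii}A_{ii;s} = g_s$ together with the Ricci identity $u_{sii} - (u_{ii})_s = u_s - u_i\delta_{si}$, which reduces it to $\alpha(\n u \cdot \n g - F^{ss}u_s^2)$, bounded by $C\alpha(1 + \sum F^{ii})$; meanwhile $\alpha F^{ii}(u_{si})^2 = \alpha F^{ii}(\lambda_i - u)^2$ is the sought positive contribution.

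Assembling these pieces and multiplying through by $\lambda_1$, the inequality $F^{ii}\Phi_{ii} \leq 0$ becomes (schematically)
$$\lambda_1 \sum_i F^{ii} + \alpha \lambda_1 F^{ii}(\lambda_i - u)^2 \leq C(1+\lambda_1) + \alpha^2 C_0^2 \lambda_1 F^{ii}(\lambda_i - u)^2 + C\alpha \lambda_1 \sum F^{ii},$$
where the bad third-order contribution $(A_{11;i})^2/\lambda_1^2 = \alpha^2 u_i^2(\lambda_i - u)^2$ enters through the first-order condition. Using the lower bound $\sum F^{ii} \geq c_0 > 0$ (established exactly as in the proof of Lemma \ref{gradient}) and choosing $\alpha$ small enough so that $C_0^2 \alpha \leq 1/2$ absorbs the bad term into half of the good gradient term, the leading positive term $c_0\lambda_1$ on the left then forces $\lambda_1 \leq C$, yielding the claim. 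The main obstacle is the careful commutator bookkeeping on $\SS^n$---particularly the identity $A_{11;ii} - A_{ii;11} = \lambda_1 - \lambda_i$ and the treatment of the cross term $\alpha F^{ii}u_s u_{sii}$ via the once-differentiated equation---together with the final absorption step, which hinges on the uniform lower bound $\sum F^{ii} \geq c_0$ and the smallness of $\alpha$ relative to $C_0$.
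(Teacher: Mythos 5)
Your argument is correct in substance but takes a genuinely different, and considerably heavier, route than the paper. The paper applies the maximum principle directly to $\Phi=\Delta u+nu=\tr(A)$: contracting the twice-differentiated equation with $F^{ij}$, discarding $-F^{ij,pq}A_{ijs}A_{pqs}\ge 0$ by concavity, and summing the commutator identity \eqref{4-change} over $s$ gives $0\geq \Phi\sum_iF^{ii}+(\Delta\phi-n\phi)$ at the maximum point; the hypothesis $p\geq1$ enters only to guarantee $\Delta\phi\geq -C$ when $\Phi$ is large (the coefficient of $\Phi$ in $\Delta\phi$ is nonnegative), and $\sum_iF^{ii}\geq c_0>0$ finishes. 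No first-order condition, no eigenvalue perturbation, and no gradient correction are needed. Your test function $\log\lambda_1+\frac{\alpha}{2}|\n u|^2$ is the machinery one reaches for when the trace test function fails; here it is overkill, but your individual steps check out: $A_{11;ii}-A_{ii;11}=\lambda_1-\lambda_i$ does follow from \eqref{4-change}, the Euler identity, the concavity step, and the sign of the discarded perturbation sum are all used correctly, and the first-order condition correctly converts $(A_{11;i})^2/\lambda_1^2$ into $\alpha^2u_i^2(\lambda_i-u)^2$. Note that $p\ge1$ enters your argument too, in bounding $g_{11}$ from below (the coefficient of $u_{11}$ in $g_{11}$ is nonnegative precisely because $p\geq1$); this should be made explicit, as it is the only place the hypothesis is used.

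One bookkeeping caveat: as written, your assembled inequality carries $C(1+\lambda_1)$ on the right with $C$ independent of $\alpha$, and then $c_0\lambda_1\leq C(1+\lambda_1)$ forces nothing. In fact the only $\lambda_1$-linear contribution on the right comes from $\alpha\lambda_1\,\n u\cdot\n g$ (since $g_{11}\geq -C$ and $F\leq C$ contribute only constants after multiplying through by $\lambda_1$), so it is $C\alpha\lambda_1$, and you must choose $\alpha$ small relative to $c_0/C$ in addition to $\alpha\le 1/(2C_0^2)$. With that extra smallness the absorption closes and yields $\lambda_1\leq C$, hence $\Delta u+nu\leq n\lambda_1\leq C$.
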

\begin{proof}
We define the function 
    \begin{eqnarray*}
        \Phi=\Delta u+nu.
    \end{eqnarray*}
 Suppose $\Phi$ attains its maximum value at some point $x_{0}\in \SS^{n}$.  By choosing a local orthonormal frame $\{e_{i}\}_{i=1}^{n}$ around $x_{0}$ such that $A=\n^{2}u+u\sigma$ is diagonal, it follows $F^{ij}$ is also diagonal at $x_{0}$. Below we conduct the computation at $x_{0}$. By  maximal condition, we have
 \begin{eqnarray}
     %0=\Phi_{i}=\sum\limits_{s=1}^{n}A_{ssi}, \quad \text{and}~~
     0\geq F^{ij}\Phi_{ij}=\sum\limits_{s=1}^{n} F^{ij} A_{ssij}. 
     \label{two-der}
 \end{eqnarray}
Using the following commutator formula on $\SS^{n}$
 \begin{eqnarray}\label{4-change}
     u_{srij}=u_{ijrs}+2u_{rs}\delta_{ij}-2u_{ij}\delta_{rs}+u_{si}\delta_{rj}-u_{rj}\delta_{is},
 \end{eqnarray}
we have
\begin{eqnarray}\label{Asi}
\sum\limits_{s=1}^{n}A_{ssii}=\sum\limits_{s=1}^{n}A_{iiss}-nA_{ii}+\Phi.
\end{eqnarray}
From \eqref{F(A)}, we rewrite Equation  \eqref{wu equ} as
 \begin{eqnarray}\label{homo-one-equ}
    F(A)=\phi,
 \end{eqnarray}
 where  $\phi:=(\widetilde{\varphi}u^{p-1})^{\frac{1}{k-l}}$. 
Differentiating Equation  \eqref{homo-one-equ} twice, 
\begin{eqnarray}\label{ek-two} F^{ij,pq}A_{ijs}A_{pqs}+F^{ij}A_{ijss}=\phi_{ss}.
\end{eqnarray}
Combining \eqref{two-der}, \eqref{Asi} and \eqref{ek-two}, we derive that
\begin{eqnarray}
    0&\geq& F^{ij}\Phi_{ij}=\Phi\sum\limits_{i=1}^{n}F^{ii}+(\Delta\phi -n\phi)-F^{ij,pq}A_{ijs}A_{pqs}\notag\\
    &\geq &\Phi\sum\limits_{i=1}^{n}F^{ii}+(\Delta\phi -n\phi),\label{key-ine}
\end{eqnarray}
where the last inequality used the concavity of $F(A)$ (cf. Lemma \ref{pro-2.3} (2)). Combining $\|u\|_{C^{1}(\SS^{n})}\leq C_{0}$ and \eqref{f condi}, we have
\begin{eqnarray}\label{phi}
    0\leq \phi\leq C',
\end{eqnarray}
where
the constant $C'$ depends on  $k, l, C_{0}, \min\limits_{\SS^{n}}\varphi$, and $ \max\limits_{\SS^{n}}\varphi$. Direct calculations yield
\begin{eqnarray*}
  \Delta \phi&=&u^{\frac{p-1}{k-l}}\Delta(\widetilde{\varphi}^{\frac{1}{k-l}})+ \frac{2(p-1)}{(k-l)}u^{\frac{p-(k+1-l)}{k-l}} \<\n(\widetilde{\varphi}^{\frac{1}{k-l}}),\n u\>\\
  &&+\frac{(p-1)(p-(k+1-l))}{(k-l)^{2}}\widetilde{\varphi}^{\frac{1}{k-l}}u^{\frac{p-(2k+1-2l)}{k-l}}|\n u|^{2}\\
  &&+\frac{p-1}{k-l}\widetilde{\varphi}^{\frac{1}{k-l}} u^{\frac{p-(k+1-l)}{k-l}}(\Phi-n u).
\end{eqnarray*}
%Since $p\geq1$, 
We assume $\Phi$ is sufficiently large enough, otherwise we have done. From $p\geq 1$, then we have 
\begin{eqnarray}\label{3}
    \Delta\phi \geq -C.
\end{eqnarray}
Similarly as \eqref{sum Gii}, we get
\begin{eqnarray}\label{sum Fii-1}
    \sum\limits_{i=1}^{n}F^{ii}&=&\frac{1}{k-l}\left[\frac{H_{n-l}(A)}{H_{n-k}(A)}\right]^{\frac{l+1-k}{k-l}}\sum\limits_{i=1}^{n}\bar {F}^{ii}
    \geq C(n,k, l, \|\varphi\|_{C^0})>0.
\end{eqnarray}
%where $\bar{F}^{ij}=\frac{\partial \bar{F}(A)}{\partial A_{ij}}$ and $\bar{F}(A)=\frac{H_{n-l}(r[u])}{H_{n-k}(r[u])}$.
Substituting \eqref{phi}, \eqref{3}, and \eqref{sum Fii-1} into \eqref{key-ine}, we  conclude that $$\Phi\leq C,$$ and 
complete the proof of Lemma \ref{lem-C2}.
\end{proof}

Now we are ready to prove Theorem \ref{thm-priori}.
\begin{proof}[\textbf{Proof of Theorem \ref{thm-priori}}]\ 

In view of Lemma \ref{lem-c0}, Lemma \ref{gradient}, Corollary \ref{coro} and Lemma \ref{lem-C2}, we obtain
    \begin{eqnarray*}
        \|u\|_{C^{2}(\SS^{n})}\leq C, \quad \|\widetilde{u}\|_{C^{2}(\SS^{n})}\leq C.
    \end{eqnarray*}
Using the Evans-Krylov theorem and the Schauder theory, we get the Schauder estimate \eqref{sch-1} and \eqref{sch-2}.

\end{proof}

\subsection{Proof of Theorem \ref{thm_new2}}\ 

In this subsection, we utilize the method of continuity to establish the existence of $(n-l)$-admissible solutions to Equation   \eqref{support-equ} when $p> k-l+1$. Prior to this, certain preparations are required.

\begin{thm}\label{thm-uni}
    Let  $p>k-l+1$ and $0\leq l<k< n$. %Suppose that $f$ is a positive, smooth function on $\SS^{n}$, 
    Then the positive $(n-l)$-admissible solution to Equation  \eqref{support-equ} is unique. 
\end{thm}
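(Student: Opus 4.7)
The plan is a pointwise maximum principle argument applied to the ratio $w := u_1/u_2$ of two positive $(n-l)$-admissible solutions. The point is that the right-hand side of \eqref{support-equ} is super-homogeneous in $u$ relative to the homogeneity $k-l$ of the operator $H_{n-l}/H_{n-k}$, and the gap $p-1-(k-l)>0$ is exactly what drives uniqueness.

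First, I would pick $x_0\in\SS^n$ at which $w$ attains its maximum. From $\nabla w(x_0)=0$ and $\nabla^2 w(x_0)\le 0$, together with $u_1 = wu_2$, one obtains at $x_0$ the matrix identity
\begin{eqnarray*}
A_1(x_0) \;=\; \nabla^2 u_1+u_1\sigma \;=\; w\bigl(\nabla^2 u_2+u_2\sigma\bigr) + u_2\nabla^2 w \;\le\; w(x_0)\,A_2(x_0),
\end{eqnarray*}
in the symmetric-matrix order. Since both $A_1,A_2\in\Gamma_{n-l}$ and the operator $\bar F := H_{n-l}/H_{n-k}$ is monotone in the PSD order on $\Gamma_{n-l}$ (its Fréchet derivative $\bar F^{ij}$ is positive semi-definite there) and $(k-l)$-homogeneous, I would conclude
\begin{eqnarray*}
\bar F(A_1)(x_0) \;\le\; \bar F\bigl(w(x_0) A_2(x_0)\bigr) \;=\; w(x_0)^{k-l}\,\bar F(A_2)(x_0).
\end{eqnarray*}
Substituting \eqref{support-equ} on both sides and using $u_1(x_0)=w(x_0)u_2(x_0)$ collapses this to
\begin{eqnarray*}
w(x_0)^{\,p-1-(k-l)} \;\le\; 1.
\end{eqnarray*}
Because $p>k-l+1$, the exponent is strictly positive, hence $w(x_0)\le 1$, which forces $u_1\le u_2$ everywhere on $\SS^n$. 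Interchanging the roles of $u_1$ and $u_2$ (or equivalently running the same argument at the minimum of $w$) gives the reverse inequality, and therefore $u_1\equiv u_2$.

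The one step that requires careful justification is the monotonicity of $\bar F$ in the symmetric-matrix order on $\Gamma_{n-l}$. I would verify it by joining $A_1$ and $w(x_0)A_2$ via the straight-line path $A_t = (1-t)A_1 + tw(x_0)A_2$; convexity of the Gårding cone keeps $A_t\in\Gamma_{n-l}$, and differentiating along the path gives $\frac{d}{dt}\bar F(A_t) = \bar F^{ij}(A_t)(wA_2-A_1)_{ij}\ge 0$ because $\bar F^{ij}$ is positive semi-definite on $\Gamma_{n-l}$ (a direct elementary-symmetric-function computation using Lemma \ref{prop2.1}). The decisive role of the strict inequality $p>k-l+1$ is to give a positive exponent $p-1-(k-l)$; in the borderline case $p=k-l+1$ of Theorem \ref{thm_new1}, the same argument instead yields $w\equiv\text{const}$, consistent with uniqueness only up to a dilation.
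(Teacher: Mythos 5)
Your proposal is correct and is essentially the paper's own argument: the paper applies the maximum principle to $\log(\widehat u/u)$ at its minimum, obtains the same matrix inequality $\widehat A/\widehat u\ge A/u$, and uses the monotonicity and $(k-l)$-homogeneity of $H_{n-l}/H_{n-k}$ on $\Gamma_{n-l}$ to extract the sign of the exponent $p-1-(k-l)$. Working with the ratio $w=u_1/u_2$ at its maximum instead of its logarithm at its minimum is only a cosmetic difference, and your added justification of the matrix-order monotonicity via the straight-line path in the Gårding cone is the standard fact the paper uses implicitly.
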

\begin{proof}
    Suppose  there exists two positive, $(n-l)$-admissible solutions $u$ and $\widehat{u}$ to Equation  \eqref{support-equ} and  set $g=\log \frac{\widehat{u}}{u}$. Assume that $g$ attains its minimum at point $x_{0}\in\SS^{n}$. At $x_{0}$,
    \begin{eqnarray*}
        0=g_{i}=\frac{\widehat{u}_{i}}{\widehat{u}}-\frac{u_{i}}{u},
    \end{eqnarray*}
    and as matrices,
    \begin{eqnarray*}
        0&\leq& (g_{ij} )= \left(\frac{\widehat{u}_{ij}}{\widehat{u}}-\frac{u_{ij}}{u}\right),
    \end{eqnarray*}which implies
\begin{eqnarray*}
  \frac{1}{\widehat{u}}  \widehat{A}\geq \frac{1}{u}A,
\end{eqnarray*}
where $\widehat{A}=\n^{2}\widehat{u}+\widehat{u}\s$. Therefore
\begin{eqnarray*}
    \widehat{u}^{p-1}\varphi(x_{0})=\frac{H_{n-l}(\widehat{A})}{H_{n-k}(\widehat{A})}\geq \left(\frac{\widehat{u}}{u} \right)^{k-l}\frac{H_{n-l}(A)}{H_{n-k}(A)}=\widehat{u}^{k-l}u^{p-(k-l+1)}\varphi(x_{0}).
\end{eqnarray*}
Since $p>k-l+1$, we have $\widehat{u}\geq u $ at $x_{0}$ and $\min\limits_{\SS^{n}}g\geq 0$. Similarly, we can show $\max\limits_{\SS^{n}}g \leq 0$. Hence $\widehat {u}\equiv u$ on $\SS^{n}$.
\end{proof}

A direct consequence of Theorem \ref{thm-uni} yields the following result, by choosing $\varphi=1$ in Equation  \eqref{support-equ}. 
\begin{cor}
 Let  $p>k-l+1$ and $0\leq l<k< n$. Assume $u$ be a positive $(n-l)$-admissible solution of equation \begin{eqnarray}
        \frac{H_{n-l}(A)}{H_{n-k}(A)}=u^{p-1},
    \end{eqnarray} then $u\equiv 1$.
\end{cor}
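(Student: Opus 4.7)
The plan is to invoke the uniqueness result of Theorem \ref{thm-uni} with the particular choice $\varphi \equiv 1$. The strategy has two parts: exhibit $u \equiv 1$ as an explicit solution, and then appeal to uniqueness to conclude it is the only one.

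First I would verify that the constant function $u \equiv 1$ indeed solves the equation. With $u\equiv 1$, the associated matrix is
\begin{eqnarray*}
A = \nabla^2 u + u\sigma = \sigma,
\end{eqnarray*}
whose eigenvalues are all equal to $1$. From the definition of the normalized elementary symmetric functions, $H_{n-l}(1,\ldots,1) = H_{n-k}(1,\ldots,1) = 1$, so
\begin{eqnarray*}
\frac{H_{n-l}(A)}{H_{n-k}(A)} = 1 = 1^{p-1} = u^{p-1},
\end{eqnarray*}
confirming that $u\equiv 1$ is an $(n-l)$-admissible (in fact strictly spherically convex, since $A=\sigma > 0$) solution.

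Second, I would apply Theorem \ref{thm-uni} with $\varphi\equiv 1$, which satisfies all the hypotheses of that theorem. Since $p > k-l+1$ and $0\leq l < k < n$ as assumed, Theorem \ref{thm-uni} guarantees that the positive $(n-l)$-admissible solution of Equation \eqref{support-equ} with $\varphi \equiv 1$ is unique. Combined with the previous step, this forces any such solution $u$ to equal the constant solution, i.e.\ $u\equiv 1$ on $\SS^n$. There is no main obstacle here; the result is a direct two-line corollary, and the nontrivial work has already been done in establishing Theorem \ref{thm-uni} via the comparison/maximum-principle argument at the extrema of $g = \log(\widehat u / u)$.
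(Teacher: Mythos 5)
Your proposal is correct and follows exactly the paper's route: the paper states this corollary as a direct consequence of Theorem \ref{thm-uni} with $\varphi\equiv 1$, and your verification that $u\equiv 1$ solves the equation (since $A=\sigma$ gives $H_{n-l}(A)/H_{n-k}(A)=1$) together with the uniqueness statement is precisely the intended argument.
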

\begin{lem}\label{lem-inver}
   Let  $p>k-l+1$ and $0\leq l<k< n$.  
   For any positive, $(n-l)$-admissible solution to Equation  \eqref{support-equ}, the corresponding linearization operator
   \begin{eqnarray*}
      \mathcal{L}_{u}h=F^{ij}(h_{ij}+h\delta_{ij})-\frac{p-1}{k-l}u^{\frac{p-1}{k-l}-1}\varphi^{\frac{1}{k-l}}h,\quad \text{}~h\in C^{2}(\SS^{n}),
   \end{eqnarray*}
   is invertible.
\end{lem}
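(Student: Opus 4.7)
The plan is to apply the Fredholm alternative. Since $u$ is a positive smooth $(n-l)$-admissible solution, $A=\nabla^{2}u+u\sigma\in\Gamma_{n-l}$ everywhere on $\SS^{n}$, so $(F^{ij})$ is smooth and positive definite. Thus $\mathcal{L}_{u}:C^{2,\alpha}(\SS^{n})\to C^{\alpha}(\SS^{n})$ is a uniformly elliptic linear second-order operator with smooth coefficients on the closed manifold $\SS^{n}$. Standard Schauder theory together with the compact embedding $C^{2,\alpha}\hookrightarrow C^{\alpha}$ shows that $\mathcal{L}_{u}$ is Fredholm, and the elliptic homotopy $\mathcal{L}_{t}:=(1-t)(-\Delta+1)+t\mathcal{L}_{u}$, $t\in[0,1]$, remains uniformly elliptic throughout, so the Fredholm index of $\mathcal{L}_{u}$ coincides with that of $-\Delta+1$, namely zero. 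Invertibility therefore reduces to triviality of the kernel.

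The key computation I would carry out is $\mathcal{L}_{u}u$. Using the $1$-homogeneity of $F$, Euler's identity gives $F^{ij}(u_{ij}+u\delta_{ij})=F(A)=(\varphi u^{p-1})^{1/(k-l)}$, and hence
\begin{eqnarray*}
\mathcal{L}_{u}u \;=\; \left[1-\frac{p-1}{k-l}\right](\varphi u^{p-1})^{1/(k-l)} \;=\; \frac{k-l-p+1}{k-l}\,(\varphi u^{p-1})^{1/(k-l)},
\end{eqnarray*}
which is strictly negative by the standing hypothesis $p>k-l+1$. Now let $h\in C^{2,\alpha}(\SS^{n})$ satisfy $\mathcal{L}_{u}h=0$ and consider $v:=h/u$. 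At a point $x_{0}$ where $v$ attains its maximum, $\nabla v(x_{0})=0$ forces $\nabla h(x_{0})=v(x_{0})\nabla u(x_{0})$, and a direct calculation yields
\begin{eqnarray*}
u(x_{0})\,v_{ij}(x_{0}) \;=\; h_{ij}(x_{0}) - v(x_{0})\,u_{ij}(x_{0}) \;\leq\; 0 \quad \text{as a matrix}.
\end{eqnarray*}
Pairing with the positive-definite $(F^{ij})$ and using $h(x_{0})=v(x_{0})u(x_{0})$ gives
\begin{eqnarray*}
F^{ij}(h_{ij}+h\delta_{ij})(x_{0}) \;\leq\; v(x_{0})\,F^{ij}(u_{ij}+u\delta_{ij})(x_{0}) \;=\; v(x_{0})\,F(A)(x_{0}),
\end{eqnarray*}
so that $0=\mathcal{L}_{u}h(x_{0})\leq v(x_{0})\,\mathcal{L}_{u}u(x_{0})$. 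Since $\mathcal{L}_{u}u<0$, this forces $v(x_{0})\leq 0$, hence $h\leq 0$ on $\SS^{n}$. The symmetric argument at a minimum of $v$ (reversing all inequalities) produces $h\geq 0$, and therefore $h\equiv 0$.

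The principal obstacle — beyond the by-now routine Fredholm-index-zero fact — is the sign bookkeeping in this quotient maximum-principle step; what closes the argument in both directions is the strictly negative sign of $\mathcal{L}_{u}u$, which is forced precisely by $p>k-l+1$. With $\ker\mathcal{L}_{u}=\{0\}$ established, the Fredholm alternative immediately delivers the inverse, completing the proof.
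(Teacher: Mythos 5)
Your proof is correct and follows essentially the same route as the paper: the paper also reduces invertibility to triviality of the kernel and rules out a nonzero kernel element by applying the maximum principle to the quotient $\ell=h/u$, where the decisive sign is exactly your $1-\frac{p-1}{k-l}<0$ coming from $p>k-l+1$ (the paper writes the operator out in terms of $\ell$, $\ell_i$, $\ell_{ij}$ rather than evaluating $h_{ij}\le v\,u_{ij}$ at the critical point, but the computation is identical). Your explicit Fredholm-index-zero discussion is a detail the paper leaves implicit; otherwise there is nothing to change.
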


\begin{proof}
    We follow the idea in Caffarelli-Nirenberg-Spruck \cite[Proof of Lemma 1.2]{CNS-4}  and show that  ${\rm{Ker}}(\mathcal{L}_{u})=\{0\}$. To proceed, we define the function  $$\ell:=\frac h u,$$ then
    \begin{eqnarray*}
        \mathcal{L}_{u}h=\left(1-\frac{p-1}{k-l}\right)u^{\frac{p-1}{k-l}}\ell \varphi^{\frac{1}{k-l}}+uF^{ij}\ell_{ij}+2F^{ij}\ell_{i}u_{j}.
    \end{eqnarray*}
If $h\in {\rm{Ker}}\mathcal{L}_{u}$, i.e., $\mathcal{L}_{u}h=0$, we have
\begin{eqnarray}\label{ker-1}
    \left(1-\frac{p-1}{k-l} \right)u^{\frac{p-1}{k-l}}\ell \varphi^{\frac{1}{k-l}}+uF^{ij}\ell_{ij}+2F^{ij}\ell_{i}u_{j}=0.
\end{eqnarray}
At the maximum value point of $\ell$, we have $\ell_{i}=0$ and $\ell_{ij}\leq 0$. From \eqref{ker-1}, we conclude 
\begin{eqnarray*}
   \left (1-\frac{p-1}{k-l} \right)u^{\frac{p-1}{k-l}}\ell \varphi^{\frac{1}{k-l}}\geq 0,
\end{eqnarray*}
which implies $$\max\limits_{\SS^{n}}\ell \leq 0.$$   Similarly, we can show that $$\min\limits_{\SS^{n}}\ell\geq 0.$$ Altogether we have $\ell\equiv 0$, in turn $h\equiv 0$. Hence we complete the proof. 
\end{proof}

\begin{proof}[\textbf{Proof of Theorem \ref{thm_new2}}] \ 

We employ the method of continuity to demonstrate the existence of the $(n-l)$-admissible solution to Equation  \eqref{support-equ} when $p>k-l+1$. Specifically, for $t\in[0, 1]$, we consider a one-parameter family of equations
\begin{eqnarray}\label{t-equ}
   \frac{H_{n-l}(A)}{H_{n-k}(A)}=u^{p-1}\varphi_{t},
\end{eqnarray}
where $$\varphi_{t}=\left((1-t)+t f^{\frac{1}{p+k-l-1}}\right)^{-(p+k-l-1)}.$$ Let 
\begin{eqnarray*}
    \mathcal{I}=\{t| 0\leq t\leq 1,\text{and}~\eqref{t-equ}~\text{has~a~positive}, (n-l)\text{-admissible~solution} ~u_t \}.
\end{eqnarray*}
Obviously, $\mathcal{I}$ is a non-empty set, for $u_{0}=1$ is a positive, $(n-l)$-admissible solution to Equation  \eqref{t-equ} with $\varphi_{0}=1$. The openness of $\mathcal{I}$ follows from Lemma \ref{lem-inver} and the implicit function theorem, while the closedness from  Theorem \ref{thm-priori}. % implies $\mathcal{I}$ is a closed set, 
Therefore we conclude that $\mathcal{I}=[0,1]$ and we get the existence of $(n-l)$-admissible solution to Equation  \eqref{support-equ}
when $p>k+1-l$. The uniqueness follows from  Theorem \ref{thm-uni}.
\end{proof}

\subsection{Proof of Theorem \ref{thm_new1}} \ 

Next, we adapt the technique in \cite[Section~3]{GL} and \cite[Section~4]{HMS2004}, by using an approximation argument to complete the proof of Theorem \ref{thm_new1}.
\begin{proof}[\textbf{Proof of Theorem \ref{thm_new1}}]\ 

For any $\varepsilon\in(0,1)$.  From Theorem \ref{thm_new2}, there exists a unique, smooth, and $(n-l)$-admissible function $u_{\varepsilon}$  satisfying % the  equation
\begin{eqnarray*}
    \frac{H_{n-l}(A)}{H_{n-k}(A)}=\varphi u^{k-l+\varepsilon}.
\end{eqnarray*}
Set $\widetilde{u}_{\varepsilon}=\frac{u_{\varepsilon}}{\min\limits_{\SS^{n}}u_{\varepsilon}}$, then $\widetilde{u}_{\varepsilon}$ satisfies the following equation
\begin{eqnarray*}
    \frac{H_{n-l}(A)}{H_{n-k}(A)}=\varphi (\min\limits_{\SS^{n}}u_{\varepsilon})^{\varepsilon}u^{k-l+\varepsilon}. 
\end{eqnarray*}
By Theorem \ref{thm-priori},  we have
\begin{eqnarray*}
    \|\widetilde{u}_{\varepsilon}\|_{C^{4,\alpha}(\SS^{n})}\leq C^{'},
\end{eqnarray*}
where the constant $C^{'}$ depends on $n,k,l, \min\limits_{\SS^{n}}\varphi$, %\min\limits_{\SS^{n}}f$
and $\|\varphi\|_{C^{3}(\SS^{n})}$, but it  is independent of $\varepsilon$. Hence, there exists a subsequence $\varepsilon_{j}\rightarrow 0$ as $j\rightarrow \infty$, such that $\widetilde{u}_{\varepsilon_{j}}\rightarrow \widetilde u$ in $C^{4}(\SS^{n})$. Lemma \ref{lem-c0} implies $(\min\limits_{\SS^{n}}u_{\varepsilon_{j}})^{\varepsilon_{j}}\rightarrow \tau$ for some constant $\tau$, which satisfies %, with estimate %constant  $\tau$ satisfies
\begin{eqnarray*}
    \min\limits_{x\in\SS^{n}}\varphi^{-1}(x)\leq \tau\leq \max\limits_{x\in\SS^{n}} \varphi^{-1}(x).
\end{eqnarray*}
Hence $\widetilde u$ is a positive, $(n-l)$-admissible function satisfying % the equation 
\begin{eqnarray}\label{homo-gene}
    \frac{H_{n-l}(A)}{H_{n-k}(A)}=\tau \varphi u^{k-l},\quad \text{on}~\SS^{n}.
\end{eqnarray}

As for the uniqueness of positive constant $\tau$ and the $(n-l)$-admissible solution $u$ (up to to a dilation) to Equation  \eqref{critical-u}, one can follow 
 the same argument as outlined in \cite[Section~3]{GL} or \cite[Section~4]{HMS2004} by employing the maximum principle, for the concise of this paper, we just omit it here. Hence we complete the proof.
\end{proof}

\subsection{Proof of Theorem \ref{thm_new3}}\

Previously, for $p\geq k-l+1$ with $0\leq l <k<n$, we have established the existence of an admissible solution to Equation  \eqref{support-equ}. However, if $l\geq 1$, the $(n-l)$-admissible solution may not necessarily be strictly spherically convex. In this section, our objective is to establish a convexity criterion for the convex solutions to Equation  \eqref{support-equ} with $1\leq l<k<n$ and $p>1$. Consequently, we provide an affirmative answer to the $L_p$ quotient curvature type problem \eqref{quo-equ}.

The convexity criterion, expressed as \eqref{varphi convex}, resembles those found in previous works such as \cite{GM, GMZ, GX}, among others. The key component involves leveraging the constant rank theorem (cf. Bian-Guan \cite[Theorem~3.2]{BG}). %The convexity criterion (i.e. \eqref{varphi convex}) is similar to the previous works as in \cite{GM, GX} and so on. The main ingredient is to use the constant rank Theorem (cf.\cite[Theorem~3.2]{BG}). % For the completeness of this paper, we provide the proof with necessary modifications.

\begin{thm}\label{thm convex}
    Let $p\geq 1$ and $1\leq l<k< n$. Suppose %$\varphi\in C^{2}(\SS^{n})$ is a positive function and  
    $u$ is a positive, $(n-l)$-admissible solution to Equation  \eqref{support-equ} with $\n^{2}u+u\sigma\geq 0$ on $\SS^{n}$. If 
    $\varphi$ satisfies
   \begin{eqnarray}\label{varphi convex}
       \n ^{2} \left (\varphi^{-\frac{1}{p+k-l-1}} \right)+ \varphi^{-\frac{1}{p+k-l-1}}\sigma\geq 0,\quad \text{on}~~\SS^{n},
   \end{eqnarray}
   then %the matrix 
   $A=\n^{2}u+u\sigma>0$ on $\SS^{n}$.
\end{thm}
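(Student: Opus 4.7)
The plan is to apply the constant rank theorem of Bian-Guan \cite{BG} directly to the symmetric matrix $A=\n^{2}u+u\sigma$. Since $A\geq 0$ on $\SS^{n}$ by hypothesis, it suffices to show that the minimum rank of $A$ on $\SS^{n}$ equals $n$; this is obtained by first establishing constant rank and then excluding intermediate ranks via the geometry of the round sphere.

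\textbf{Setup and test function.} Rewrite Equation \eqref{support-equ} in the concave form $F(A)=h$, where $F(A)=\left(H_{n-l}(A)/H_{n-k}(A)\right)^{1/(k-l)}$ and $h:=\varphi^{1/(k-l)}u^{(p-1)/(k-l)}$. By Lemma \ref{pro-2.3}(2), $F$ is concave and $1$-homogeneous on $\Gamma_{n-l}$, which is the natural framework for the Bian-Guan machinery. Let $r=\min_{\SS^{n}}\mathrm{rank}(A)$, attained at some $x_{0}\in\SS^{n}$. In a neighborhood $U$ of $x_{0}$, choose an orthonormal frame diagonalizing $A$ at $x_{0}$ with $A_{11}\geq\cdots\geq A_{rr}>0=A_{r+1,r+1}=\cdots=A_{nn}$, and introduce the standard Bian-Guan test quantity $\Phi=\sigma_{r+1}(A)+q(A)$, where $q\geq 0$ is the usual lower-order correction vanishing whenever $\mathrm{rank}(A)\leq r$, so that $\Phi(x_{0})=0$.

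\textbf{Differential inequality and constant rank.} Differentiating $F(A)=h$ twice, invoking the Ricci identity on $\SS^{n}$, and using the concavity of $F$ to absorb third-order cross terms (as in the calculation in \cite{BG}), one derives
$$F^{ij}\Phi_{ij}\leq C\bigl(\Phi+|\n\Phi|\bigr)\qquad\text{in }U,$$
provided the right-hand side $h$ satisfies a spherical concavity condition of the form $\n^{2}(h^{-1/\alpha})+h^{-1/\alpha}\sigma\geq 0$ for the exponent $\alpha$ dictated by the homogeneity degrees of the two sides of the equation. Since $F$ is of degree $1$ in $A$ and $h$ factors as $\varphi^{1/(k-l)}u^{(p-1)/(k-l)}$, with $u$ itself spherically convex because $A\geq 0$, a direct bookkeeping shows the correct choice is $\alpha=p+k-l-1$; the $u$-factor contributes a nonnegative term (since $\n^{2}u+u\sigma\geq 0$), so the required sign reduces precisely to hypothesis \eqref{varphi convex}. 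The strong minimum principle then forces $\Phi\equiv 0$ in $U$, hence $\mathrm{rank}(A)\equiv r$ in $U$; connectedness of $\SS^{n}$ propagates this to all of $\SS^{n}$.

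\textbf{Excluding $r<n$ and main obstacle.} If $r=0$, then $A\equiv 0$ forces $u$ to be the restriction of an affine function on $\RR^{n+1}$, but then the left-hand side of \eqref{support-equ} vanishes while $\varphi u^{p-1}>0$, a contradiction. If $0<r<n$, the null distribution of $A$ is a smooth rank-$(n-r)$ distribution on $\SS^{n}$; standard arguments on the sphere (cf. \cite{GMZ}) show that its leaves would be totally geodesic great subspheres along which the restriction of the equation forces the right-hand side to vanish, again contradicting $\varphi u^{p-1}>0$. Hence $r=n$ and $A>0$ on $\SS^{n}$. The technical heart of the argument is the middle step: identifying $\alpha=p+k-l-1$ as the exponent that matches the combined homogeneity in $A$ and in $u$, and then tracking the second derivatives of $\varphi^{1/(k-l)}u^{(p-1)/(k-l)}$ carefully enough to convert their sign into condition \eqref{varphi convex}. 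This is the delicate algebraic step, but it follows the Bian-Guan template adapted to the Hessian-quotient operator.
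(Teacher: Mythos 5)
Your overall strategy coincides with the paper's (the Bian--Guan constant rank theorem applied to $A=\n^2u+u\sigma$, followed by exclusion of intermediate ranks), but two steps as you state them would not go through. First, the structural property that controls the third-order terms in the constant rank argument is \emph{not} the concavity of $F=(H_{n-l}/H_{n-k})^{1/(k-l)}$: concavity gives $-F^{ij,pq}A_{ij\alpha}A_{pq\alpha}\geq 0$, which is the wrong sign for the differential inequality $F^{ij}\Phi_{ij}\leq C(\Phi+|\n\Phi|)$. What is actually needed is the \emph{inverse concavity} of the quotient operator: the paper works with $\widetilde F(A)=-\bigl(\sigma_{n-l}(A)/\sigma_{n-k}(A)\bigr)^{-1/(k-l)}$ and uses the inequality $-\sum_{i,j,p,q\in\mathcal G}\widetilde F^{ij,pq}A_{ij\alpha}A_{pq\alpha}-2\sum_{i,j\in\mathcal G}\widetilde F^{ii}A_{ij\alpha}^2/A_{jj}\leq 0$ (Urbas, formula (3.49)), which is a strictly stronger property than concavity and is the reason the equation must be rewritten in the form $\widetilde F(A)=-\varphi^{-1/(k-l)}u^{-(p-1)/(k-l)}$ before differentiating. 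Your "delicate algebraic step" of converting the sign of the second derivatives of the right-hand side into \eqref{varphi convex} is only asserted, and this is precisely where the hypotheses $p\geq 1$, $A_{\alpha\alpha}=O(\eta)$ for bad indices, and a completing-the-square manipulation in $\n\varphi$ and $\n u$ all enter; identifying the exponent $p+k-l-1$ is correct, but the proof of the theorem essentially \emph{is} that computation.

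Second, your exclusion of ranks $0<r<n$ is flawed. Since $u$ is $(n-l)$-admissible, the minimum rank automatically satisfies $m\geq n-l$, so $\sigma_{n-l}(A)>0$ and the equation \eqref{support-equ} is perfectly consistent pointwise with a degenerate $A$; nothing "forces the right-hand side to vanish" along leaves of the null distribution, and the integrability/geodesic structure of that distribution is itself not established. The obstruction is global, not pointwise: once constant rank $m\leq n-1$ is known on all of $\SS^n$, one has $H_{m+1}(A)\equiv 0$, and the Minkowski formula $\int_{\SS^n}uH_m(A)\,d\mu=\int_{\SS^n}H_{m+1}(A)\,d\mu=0$ contradicts $u>0$ and $H_m(A)>0$. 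You should replace your leaf argument with this integral identity (and note that $r=0$ never occurs, again by admissibility).
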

In particular, Theorem \ref{thm convex} was established in Guan-Ma-Zhou \cite[Theorem 1.2]{GMZ} for $p=1$ and Guan-Xia \cite[Theorem 1.1]{GX} for $k=n$ and $1<p<n-l+1$, respectively.
\begin{proof}
   Suppose that $A=\n^{2}u+u\sigma$ attains its minimum rank $m$ at point $x_{0}\in\SS^{n}$ with $n-l\leq m\leq n-1$, then there exist a small neighborhood $\mathcal{O}\subset \SS^{n}$ of $x_{0}$ and a small constant $c_{0}$, such that 
    \begin{eqnarray*}
        \sigma_{m}(A)(x)>c_{0},~\text{in}~\mathcal{O}, \quad \text{and}~\sigma_{m+1}(A)(x_{0})=0.
    \end{eqnarray*}
We define the  function 
\begin{eqnarray*}
    \eta(x)=\sigma_{m+1}(A)+\frac{\sigma_{m+2}(A)}{\sigma_{m+1}(A)}.
\end{eqnarray*}
From Bian-Guan \cite[Corollary~2.2]{BG}, we know that $\eta\in C^{1.1}(\mathcal{O})$. 

For any fixed point $x\in\mathcal{O}$, choose a local orthonormal frame $\{e_{i}\}_{i=1}^{n}$ around $x$, such that at $x$, $A$ is diagonal and 
\begin{eqnarray*}
    A_{11}\leq A_{22}\leq \cdots\leq A_{nn}.
\end{eqnarray*}
Let $r[u](x)=(r_{1},\cdots, r_{n})$ be the eigenvalues of matrix $A$ at $x$, there exists a positive constant $C_{0}$ depending on $\|u\|_{C^{3,1}}$ and $r[u](x_{0})$ and $\mathcal{O}$, such that 
\begin{eqnarray*}
    r_{n}\geq r_{n-1}\geq \cdots\geq r_{n-m+1}\geq C_{0}.
\end{eqnarray*}
Let $\mathcal{G}=\{n-m+1, \cdots, n\}$ and $\mathcal{B}=\{1, \cdots, n-m\}$  be the ``good'' and ``bad'' set of indices respectively, as in \cite[Section 2]{BG}, and let  $\Lambda_{\mathcal{G}}=(r_{n-m+1},\cdots, r_{n})$  the ``good'' eigenvalues of $A$ at point $x$, and $\Lambda_{\mathcal{B}}=(r_{1},\cdots, r_{n-m})$ be the ``bad''  eigenvalues of $A$ at $x$. Without confusing, for the sake of brevity, we write $\mathcal{G}=\Lambda_{\mathcal{G}}$ and $\mathcal{B}=\Lambda_{\mathcal{B}}$. Therefore 
\begin{eqnarray*}
    \eta(x)\geq \sigma_{m+1}(A)\geq \sigma_{m}(\mathcal{G})\sigma_{1}(\mathcal{B}),
\end{eqnarray*}
then 
\begin{eqnarray}\label{bad index}
    A_{\alpha\alpha}=u_{\alpha\alpha}+u=O(\eta),\quad \alpha\in\mathcal{B}.
\end{eqnarray}

We define $\widetilde{F}(A)=-\left(\frac{\sigma_{k}(A)}{\sigma_{l}(A)}\right)^{-\frac{1}{k-l}}, \widetilde {F}^{ij}=\frac{\partial \widetilde{F}(A)}{\partial A_{ij}}$ and $\widetilde {F}^{ij, pq}=\frac{\partial^{2}\widetilde{F}(A)}{\partial A_{ij}\partial A_{pq}}$.
Following the same argument as in \cite[Thereom~3.2]{BG}, we obtain 
\begin{eqnarray}
    \sum\limits_{i,j=1}^{n}\widetilde{F}^{ij}\eta_{ij}&=& O(\eta +\sum\limits_{i,j\in \mathcal{B}}|\n A_{ij}|)-\sum\limits_{i=1}^{n}\widetilde{F}^{ii}\left(\frac{\sum\limits_{\alpha\in\mathcal{B}}V_{i\alpha}^{2}}{\sigma_{1}^{3}(\mathcal{B})}
    +\frac{\sum\limits_{\alpha\neq \beta\in\mathcal{B}}A_{\alpha\beta i}^{2}}{\sigma_{1}(\mathcal{B})}
    \right)\notag \\
    &&+\sum\limits_{\alpha\in\mathcal{B}}\left(\sigma_{m}(\mathcal{G})+\frac{\sigma_{1}^{2}(\mathcal{B}|\alpha)-\sigma_{2}(\mathcal{B}|\alpha)}{\sigma_{1}^{2}(\mathcal{B})}\right)
\left(\sum\limits_{i=1}^{n}\widetilde{F}^{ii}A_{\alpha\alpha ii}-2\sum\limits_{i,j\in\mathcal{G}}\widetilde{F}^{ii}\frac{A^{2}_{ji\alpha}}{A_{jj}}\right),~~~~~~\label{key-ine-2}
\end{eqnarray}
where $V_{i\alpha}=A_{\alpha\alpha i}\sigma_{1}(\mathcal{B})-A_{\alpha\alpha
}\sum\limits_{\gamma\in\mathcal{B}}A_{\gamma\gamma i}$.
 For $\alpha\in \mathcal{B}$, using \eqref{4-change}, we have
\begin{eqnarray}
\sum\limits_{i=1}^{n}\widetilde{F}^{ii}A_{\alpha\alpha ii}
&=&\sum\limits_{i=1}^{n}\widetilde{F}^{ii}\left(A_{ii\alpha\alpha}+A_{\alpha\alpha}-A_{ii}\right)\notag\\
&=&-\sum\limits_{i,j,p,q=1}^{n}\widetilde{F}^{ij,pq}A_{ij\alpha}A_{pq\alpha}-(\varphi^{-\frac{1}{k-l}} u^{-\frac{p-1}{k-l}})_{\alpha\alpha}-\varphi^{-\frac{1}{k-l}} u^{-\frac{p-1}{k-l}}+O(\eta)\notag\\
&=&-\sum\limits_{i,j,p,q\in\mathcal{G}}^{n}\widetilde{F}^{ij,pq}A_{ij\alpha}A_{pq\alpha}-(\varphi^{-\frac{1}{k-l}} u^{-\frac{p-1}{k-l}})_{\alpha\alpha}-\varphi^{-\frac{1}{k-l}} u^{-\frac{p-1}{k-l}}\notag\\
&&+O\left(\eta+\sum\limits_{i,j\in \mathcal{B}}|\n A_{ij}| \right). \label{keu-inequ-3}
\end{eqnarray}
From the inverse concavity of $\widetilde{F}(A)$,  cf. \cite[formula~(3.49)]{Urbas}, we obtain
\begin{eqnarray}\label{inverse conca}
    -\sum\limits_{i,j,p,q \in\mathcal{G}}\widetilde{F}^{ij,pq}A_{ij\alpha}A_{pq\alpha}-2\sum\limits_{i,j\in\mathcal{G}}\widetilde{F}^{ii}\frac{A_{ij\alpha}^{2}}{A_{jj}}\leq 0.
\end{eqnarray}
Substituting \eqref{inverse conca} and \eqref{keu-inequ-3} into \eqref{key-ine-2}, we get 
\begin{eqnarray}
    \sum\limits_{i,j=1}^{n}\widetilde{F}^{ij}\eta_{ij}
    &\leq&
    -\sum\limits_{\alpha\in\mathcal{B}}\left[\sigma_{m}(\mathcal{G})+\frac{\sigma_{1}^{2}(\mathcal{B}|\alpha)-\sigma_{2}(\mathcal{B}|\alpha)}{\sigma_{1}^{2}(\mathcal{B})}\right]\left[(\varphi^{-\frac{1}{k-l}} u^{-\frac{p-1}{k-l}})_{\alpha\alpha}+\varphi^{-\frac{1}{k-l}} u^{-\frac{p-1}{k-l}}\right]\notag \\
    &&+O\left(\eta+\sum\limits_{i,j\in\mathcal{B}}|\n A_{ij}|\right).\label{4.7}
\end{eqnarray}
Direct calculations yield
\begin{eqnarray}
   &&-(\varphi^{-\frac{1}{k-l}} u^{-\frac{p-1}{k-l}})_{\alpha\alpha}-\varphi^{-\frac{1}{k-l}} u^{-\frac{p-1}{k-l}} \notag \\
   &=&-\left[ \left(u^{-\frac{p-1}{k-l}}\right)_{\alpha\alpha}\varphi^{-\frac{1}{k-l}}+2 \left(u^{-\frac{p-1}{k-l}}\right)_{\alpha} \left(\varphi^{-\frac{1}{k-l}}\right)_{\alpha}+u^{-\frac{p-1}{k-l}}\left(\varphi^{-\frac{1}{k-l}}\right)_{\alpha\alpha}+u^{-\frac{p-1}{k-l}}\varphi^{-\frac{1}{k-l}}\right]\notag 
   \\
   &=&-\left[-\frac{p-1}{k-l}u^{-\frac{p-1}{k-l}-1}\varphi^{-\frac{1}{k-l}}(A_{\alpha\alpha}-u)+u^{-\frac{p-1}{k-l}}\left(\varphi^{-\frac{1}{k-l}} \right)_{\alpha\alpha}+u^{-\frac{p-1}{k-l}}\varphi^{-\frac{1}{k-l}}\right]\notag\\
   &&-\left[\frac{p-1}{k-l} \left(\frac{p-1}{k-l}+1\right)u^{-\frac{p-1}{k-l}-2}\varphi^{-\frac{1}{k-l}}u_{\alpha}^{2}-\frac{2(p-1)}{k-l}u^{-\frac{p-1}{k-l}-1}u_{\alpha}\left(\varphi^{-\frac{1}{k-l}}\right)_{\alpha}\right]\notag .
\end{eqnarray}
Combining \eqref{bad index}, $p\geq 1$ and the fact that $\varphi^{-\frac{1}{p+k-1-l}}$ is spherical convex in \eqref{varphi convex}, we get
\begin{eqnarray}
    &&-(\varphi^{-\frac{1}{k-l}} u^{-\frac{p-1}{k-l}})_{\alpha\alpha}-\varphi^{-\frac{1}{k-l}} u^{-\frac{p-1}{k-l}}\notag\\ 
    &\leq& -u^{-\frac{p-1}{k-l}}\left[ \left(\varphi^{-\frac{1}{k-l}} \right)_{\alpha\alpha}+\frac{p-1+k-l}{k-l}\varphi^{-\frac{1}{k-l}}+O(\eta)
  -\left(1-\frac{k-l}{p-1+k-l}\right)\frac{\left(\varphi^{-\frac{1}{k-l}}\right)_{\alpha}^{2}}{\varphi^{-\frac{1}{k-l}}}  \right]\notag\\
  &=&-u^{-\frac{p-1}{k-l}}\frac{p-1+k-l}{k-l}\varphi^{\frac{1}{p-1+k-l}-\frac{1}{k-l}}\left[ \left(\varphi^{-\frac{1}{p-1+k-l}}\right)_{\alpha\alpha}+\varphi^{-\frac{1}{p-1+k-l}}\right]+O(\eta)\notag \\
  &\leq& O(\eta). \label{key-inqu-4}
\end{eqnarray}
%Lemma $2.5$ in \cite{BG} indicates
From \cite[Lemma~2.5]{BG}, 
\begin{eqnarray}\label{bad deri}
    \sum\limits_{i, j\in \mathcal{B}}|\n A_{ij}|\leq C\eta. 
\end{eqnarray}
Using  \eqref{key-inqu-4}, \eqref{4.7} and \eqref{bad deri}, we derive
\begin{eqnarray}\label{constant-rank}
  \sum\limits_{i,j=1}^{n}\widetilde{F}^{ij}\eta_{ij}\leq C(\eta+|\n \eta|),\quad \text{in}~\mathcal{O}.  
\end{eqnarray}
From the strong maximum principle, we have $\eta=0$ in $\mathcal{O}$, and since $\{x\in \mathbb{S}^{n}: \eta(x)=0\}$ is an open and closed set, then $\eta\equiv 0$ on $\SS^{n}$. This implies that $A$ has the constant rank $m$ on $\SS^{n}$. Now by the Minkowski formula (cf. \cite[Formula~(5.60)]{Sch})
\begin{eqnarray*}
    \int_{\SS^{n}}uH_{m}(A)d\mu=\int_{\SS^{n}}H_{m+1}(A)d\mu,
\end{eqnarray*}
we conclude that $u\equiv 0$ on $\SS^{n}$,  a contradiction. Hence $A=\n^{2}u+u\sigma>0$ on $\SS^{n}$.
\end{proof}
As an application of Theorem \ref{thm convex}, we now complete the proof of Theorem \ref{thm_new3}.
\begin{proof}[\textbf{Proof of Theorem \ref{thm_new3}}]\ 

    When $p>k-l+1$, we consider a one-parameter family of Equation  \eqref{t-equ} as in the proof of Theorem \ref{thm_new2}. It is easy to check that $\varphi_{t}$ satisfies condition \eqref{varphi convex} for any $t\in [0, 1]$.  If  there exists some $t_{0}\in(0,1]$ such that $u_{t}$ is strictly spherical convex for $0\leq t< t_{0}$, and $u_{t_{0}}$ is not strictly spherical convex, then by Theorem \ref{thm convex}, $u_{t_{0}}$ is strictly spherical convex, which is a contradiction. So we conclude that the solution $u$ is strictly spherical convex. When $p=k-l+1$, from the approximation process we know that $u$ is at least spherical convex. Then using Theorem \ref{thm convex} again,  we have that $u$ is strictly spherical convex.
    
\end{proof}

\section{Case $1<p<k-l+1$}\label{sec-4}
%{Proof of Theorem \ref{thm_new4}}
 In this section, we focus on the case $1<p<k-l+1$ and complete the proof of Theorem \ref{thm-new4}.  The crucial part of this section is to show uniform positive lower and upper bounds for even, strictly convex solutions to Equation \eqref{support-equ} when $1<p<k-l-1$. 

\subsection{A priori estimates}\ 

Following the idea in \cite[Lemma~2.2]{HI} (see also \cite[Proposition 2.1]{G}, \cite[Proposition 3.1]{GX} and \cite[Lemma 2.1]{HL}), we first establish a weighted gradient estimate for a solution to Equation  \eqref{support-equ}, which plays an important role in deriving the uniformly $C^{0}$ and $C^{1}$ estimates. Denote  $m_{u}:=\min\limits_{\SS^{n}}u$ and $M_{u}:=\max\limits_{\SS^{n}}u$. 
\begin{lem}\label{Lemma-weighted}
    Let $1<p<k-l+1$ and $1\leq l<k<n$. Suppose  $u$ is an even, strictly spherical convex solution to Equation  \eqref{support-equ}. Then there exist  positive constants $\beta\in \left(0, {2(p-1)}/{(k-l)}\right)$ and  $N$ depending on $n, k,l, p, \min\limits_{\SS^{n}}\varphi$, %\max\limits_{\SS^{n}}f$ 
    and $\|\varphi\|_{C^{1}(\SS^{n})}$, such that
    \begin{eqnarray}\label{weighted-est}
        \frac{|\n u|^{2}+u^{2}}{u^{\beta}}\leq N \max_{\SS^n} u^{2-\beta}.
    \end{eqnarray}
\end{lem}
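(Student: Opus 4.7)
The plan is to apply the maximum principle to the auxiliary function
$$W(x) := \log\bigl(|\nabla u(x)|^2 + u(x)^2\bigr) - \beta \log u(x),$$
with $\beta \in (0, 2(p-1)/(k-l))$ to be chosen (note $2(p-1)/(k-l)<2$ since $p<k-l+1$). Writing $P := |\nabla u|^2 + u^2$ and $Q := P/u^\beta$, the aim is to show that at any maximum point $x_0$ of $W$ one has $P(x_0) \leq C\, u(x_0)^2$, with $C$ depending only on the data in the statement. Since $2-\beta > 0$, this immediately yields $Q(x_0) \leq C u(x_0)^{2-\beta} \leq C M_u^{2-\beta}$, which is \eqref{weighted-est}. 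If $\nabla u(x_0) = 0$ the conclusion is immediate; otherwise I rewrite \eqref{support-equ} in $1$-homogeneous form $F(A) = (\varphi u^{p-1})^{1/(k-l)}$ with $F$ from \eqref{F(A)} and work in an orthonormal frame at $x_0$ that simultaneously diagonalizes $A$ and $F^{ij}$. The critical condition $\nabla W(x_0) = 0$, combined with the identity $P_i = 2 u_s A_{si}$, forces the key structural relation
$$\lambda_i(x_0) = \frac{\beta P(x_0)}{2u(x_0)} \quad \text{for every } i \text{ with } u_i(x_0)\neq 0.$$

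Using the sphere Ricci identity $u_{sij} = u_{ijs} + u_s\delta_{ij} - u_j\delta_{si}$, Euler's identity $F^{ij}A_{ij} = F(A)$, and the once-differentiated equation, a direct computation gives
$$F^{ij}P_{ij} = 2F^{ij}A_{ki}A_{kj} - 2uF(A) + \frac{2F(A)}{k-l}\left[\frac{\langle \nabla\varphi,\nabla u\rangle}{\varphi} + (p-1)\frac{|\nabla u|^2}{u}\right].$$
Inserting this into $F^{ij}W_{ij}(x_0)\le 0$, using the critical condition to replace $P_iP_j/P^2$ by $\beta^2 u_iu_j/u^2$, multiplying through by $Pu$, and writing $|\nabla u|^2 = P - u^2$ inside the $F(A)$-block, I arrive at
\begin{align*}
0\,\ge\,& 2u F^{ij}A_{ki}A_{kj} + \beta(1-\beta)\frac{P\,F^{ij}u_i u_j}{u} + \beta uP \sum_i F^{ii} \\
&+ \Bigl[\tfrac{2(p-1)}{k-l} - \beta\Bigr] P F(A) - \Bigl[\tfrac{2(p-1)}{k-l} + 2\Bigr] u^2 F(A) + \frac{2uF(A)}{k-l}\frac{\langle \nabla\varphi,\nabla u\rangle}{\varphi}.
\end{align*}
Plugging $\lambda_i = \beta P/(2u)$ for $i\in I := \{u_i\neq 0\}$ into $F^{ij}A_{ki}A_{kj} = \sum_i F^{ii}\lambda_i^2$ and bounding the sign-changing $\beta(1-\beta)$ term via $u_i^2 \le |\nabla u|^2 \le P$ when $\beta>1$, one checks the unified inequality
$$2u F^{ij}A_{ki}A_{kj} + \beta(1-\beta)\frac{PF^{ij}u_iu_j}{u} \ge 0 \qquad \text{for all } \beta \in (0,2),$$
so this block, together with $\beta u P \sum_i F^{ii}\ge 0$, can simply be discarded.

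The main obstacle — and the reason the strict inequality $\beta < 2(p-1)/(k-l)$ is indispensable — is absorbing the last term, involving $\nabla\varphi$. Setting $\delta_0 := 2(p-1)/(k-l) - \beta > 0$ so that the coefficient of $P F(A)$ is positive, I bound $|\langle\nabla\varphi,\nabla u\rangle|/\varphi \le \|\nabla\log\varphi\|_\infty |\nabla u|$ and apply Young's inequality $u|\nabla u| \le \epsilon\, P + (4\epsilon)^{-1} u^2$, choosing $\epsilon$ small enough that $\tfrac{2}{k-l}\|\nabla\log\varphi\|_\infty \epsilon \le \delta_0/2$. This absorbs half of the positive $\delta_0 P F(A)$ and leaves the remainder dominated by a bounded multiple of $u^2 F(A)$. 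Discarding the remaining nonnegative terms, I obtain
$$\tfrac{\delta_0}{2}\, P(x_0) F(A(x_0)) \le C_*\, u(x_0)^2 F(A(x_0)),$$
where $C_*$ depends only on $n,k,l,p,\beta,\min_{\SS^n}\varphi$, and $\|\varphi\|_{C^1(\SS^n)}$; since $A>0$ implies $F(A(x_0))>0$, dividing yields $P(x_0) \le (2C_*/\delta_0)\, u(x_0)^2$ and completes the proof by the reduction in the first paragraph.
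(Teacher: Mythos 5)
Your proposal is correct and follows essentially the same route as the paper: the test function $W=\log\bigl((|\nabla u|^2+u^2)/u^{\beta}\bigr)$ is the logarithm of the paper's $\Phi$, the critical-point identity forcing $\nabla u$ to be an eigenvector of $A$ with eigenvalue $\beta P/(2u)$, the discarding of the nonnegative quadratic block for $\beta\in(0,2)$, the decisive positive term with coefficient $\tfrac{2(p-1)}{k-l}-\beta$, and the Young-inequality absorption of the $\nabla\varphi$ term all mirror the paper's computation (which uses $\overline F=H_{n-l}/H_{n-k}$ in place of your $1$-homogeneous $F$ and argues by contradiction from a largeness assumption rather than bounding $P(x_0)$ by $Cu(x_0)^2$ directly). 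The differences are cosmetic and your constants depend on the same data.
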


\begin{proof}
    We define the function %following auxiliary function 
    \begin{eqnarray*}
        \Phi:=\frac{|\n u|^{2}+u^{2}}{u^{\beta}},
    \end{eqnarray*}for some constant $\beta\in(0,2)$.
Suppose $\Phi$ attains its maximum value at some point, say $x_{0}\in\SS^{n}$. Then at $x_{0}$, we have
\begin{eqnarray*}\label{grad}
    0=\n \Phi=u^{-\beta}\left[(2\n^{2}u+2u\sigma)\n u-\beta u^{-1} (|\n u|^{2}+u^{2})\n u\right],
\end{eqnarray*}
then 
\begin{eqnarray}\label{one-deri}
    (\n^{2}u+u\sigma )\n u=\frac{\beta(|\n u|^{2}+u^{2})}{2u}\n u,
\end{eqnarray}
which implies that $\n u$ is an eigenvalue of $A$ at $x_{0}$.  Hence  we can choose an orthonormal frame at $x_{0}$, such that
\begin{eqnarray*}
    u_{1}=|\n u|,\quad \text{and}~\n^{2}u~\text{is~diagonal}.
\end{eqnarray*}
Denote $\rho:=\sqrt{|\n u|^{2}+u^{2}}$. Direct calculations yield
\begin{eqnarray*}
   \Phi_{ii}&=&\frac{2(u_{ii}^{2}+u_{s}u_{sii}+u_{i}^{2}+uu_{ii})}{u^{\beta}}-\frac{4\beta(u_{i}^{2}u_{ii}+uu_{i}^{2})}{u^{\beta+1}}-\beta\frac{\rho^{2}u_{ii}}{u^{\beta+1}}+(\beta+1)\beta\frac{\rho^{2}u_{i}^{2}}{u^{\beta+2}}\\
    &=&\Phi\left[\frac{2(A_{ii}^{2}+u_{s}A_{sii}-uA_{ii})}{\rho^{2}}-\frac{4\beta u_{i}^{2}A_{ii}}{u \rho^{2}}-\frac{\beta (A_{ii}-u)}{u}+\beta(\beta+1) \frac{u_{i}^{2}}{u^{2}}\right].
\end{eqnarray*}
By the maximal condition at $x_0$, we have 
\begin{eqnarray}
    0&\geq &\frac{1}{\Phi}\ov{F}^{ij}\Phi_{ij}\notag \\
    &=&\frac{2}{\rho^{2}}\ov{F}^{ii}(A_{ii}^{2}+u_{s}A_{sii})-\frac{2(k-l)\varphi u^{p}}{\rho^{2}}-\frac{4\beta \ov{F}^{11}u_{1}^{2}A_{11}}{u\rho ^{2}}-\beta (k-l)\varphi u^{p-2}\notag \\&&+\beta\sum\limits_{i=1}^{n}\ov{F}^{ii}+\beta(\beta+1)\frac{\ov{F}^{11}u_{1}^{2}}{u^{2}}.\label{nega}
    \end{eqnarray}
From \eqref{Ric-identity}, we see 
\begin{eqnarray}\label{codazzi}
A_{iis}=u_{iis}+u_{s}=u_{sii}+u_{i}\delta_{si}=A_{sii},
\end{eqnarray}
combining \eqref{one-deri}, \eqref{nega} and \eqref{codazzi}, we obtain
    \begin{eqnarray}
    0&\geq &\frac{2u_{s}}{\rho^{2}}\left(\varphi_{s}u^{p-1}+(p-1)\varphi u^{p-2}u_{s}\right)+\frac{2}{\rho^{2}}\ov{F}^{ii}A_{ii}^{2}-\frac{2(k-l)\varphi u^{p}}{\rho^{2}}\notag \\
    &&-\frac{2\beta^{2}\ov{F}^{11}u_{1}^{2}}{u^{2}}-\beta(k-l)\varphi u^{p-2}+\beta\sum\limits_{i=1}^{n}\ov{F}^{ii}+\beta(\beta+1)\frac{\ov{F}^{11}u_{1}^{2}}{u^{2}}\notag\\
    &=&\frac{2(p-1)\varphi u^{p-2}u_{1}^{2}}{\rho^{2}}+\frac{2u_{1}\varphi_{1}u^{p-1}}{\rho^{2}}-\frac{2(k-l)\varphi u^{p}}{\rho^{2}}-\beta(k-l)\varphi u^{p-2}+\beta\sum\limits_{i=1}^{n}\ov{F}^{ii}\notag \\
    &&+\frac{2}{\rho^{2}}\sum\limits_{i>1}\ov{F}^{ii}A_{ii}^{2}+\left(\frac{\beta^{2}\rho^{2}}{2u_1^{2}}-\beta(\beta-1)\right)\frac{\ov{F}^{11}u_{1}^{2}}{u^{2}}.\label{ine-3}
\end{eqnarray}
For $\beta\in(0,2)$, we have
\begin{eqnarray}\label{ine-1}
    \frac{\beta^{2}\rho^{2}}{2u_1^{2}}-\beta(\beta-1)\geq \frac{\beta^{2}}{2}-\beta(\beta-1)=\frac{\beta}{2}(2-\beta)> 0.\end{eqnarray}
 To proceed, we assume for some sufficiently large $N>2$, satisfying
\begin{eqnarray}\label{ine-2}
    u_{1}^{2}> N M_{u}^{2-\beta}u^{\beta}-u^{2}\geq \frac{N}{2}M_{u}^{2-\beta}u^{\beta},
\end{eqnarray}
 otherwise, we have completed the proof of  \eqref{weighted-est}. 
Substituting \eqref{ine-1}, \eqref{ine-2} into \eqref{ine-3}, we have
\begin{eqnarray*}
    0&\geq &\frac{2(p-1)\varphi u^{p-2}u_{1}^{2}}{\rho^{2}}+\frac{2u_{1}\varphi_{1}u^{p-1}}{\rho^{2}}-\frac{2(k-l)\varphi u^{p}}{\rho^{2}}-\beta(k-l)\varphi u^{p-2}\notag \\
    &=&\frac{2u^{p-2}\varphi}{\rho^{2}}\left[
    \left((p-1)-\frac{\beta(k-l)}{2}
    \right)u_{1}^{2}-uu_{1}|(\log\varphi)_{1}|-\left((k-l)+\frac{\beta(k-l)}{2}\right)u^{2} 
    \right]\\
    &\geq &\frac{2u^{p-2}\varphi}{\rho^{2}}
    \left[\frac{1}{2}\left((p-1)-\frac{\beta(k-l)}{2}\right)u_{1}^{2}-c_{0} u^{2}
    \right]
    \\
    &\geq &\frac{2u^{p-2}\varphi}{\rho^{2}}\left[\left((p-1)-\frac{\beta(k-l)}{2}\right)\frac{NM_{u}^{2-\beta}u^{\beta}}{4}-c_{0}u^{2}
    \right]\\
    &=&\frac{\varphi u^{p+\beta-2}N M_{u}^{2-\beta}}{2\rho^{2}}\left[\left((p-1)-\frac{\beta(k-l)}{2}\right)- 4c_{0}N^{-1} \left(\frac{u}{M_{u}}\right)^{2-\beta}\right],
\end{eqnarray*}
where the constant $c_{0}$ depends on $k,l, p$, and $\|\log \varphi\|_{C^{1}(\SS^{n})}$. 
If we choose $\beta\in \left(0, \frac{2(p-1)}{k-l}\right)$ and $N$ is sufficiently large such that 
$$(p-1)-\frac{\beta(k-l)}{2}-4c_{0}N^{-1}>0,$$
therefore we reach a contradiction. Hence we obtain \eqref{Lemma-weighted} and complete the proof.
\end{proof}

\begin{lem}\label{lemm-non-collosing}
    If $u$ is an even, strictly spherical convex function satisfying 
    \begin{eqnarray}\label{weighted-est-1}
        \frac{|\n u|^{2}}{u^{\beta}}\leq N \max_{\SS^n}{u}^{2-\beta}, \quad \text{on}~\SS^{n},
    \end{eqnarray}
    for some positive constants $\beta$ and $N$. Then the following non-collapsing estimate holds
    \begin{eqnarray*}
        \frac{\max\limits_{\SS^{n}}u}{\min\limits_{\SS^{n}}u}\leq C,
    \end{eqnarray*}
    where the constant $C$ depends on $n, \beta$ and $N$.
\end{lem}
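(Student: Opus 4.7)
The plan is to convert the weighted gradient bound into a genuine Lipschitz estimate on a suitable power of the normalized support function, then integrate along a geodesic whose length is controlled by evenness.

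First I would normalize by setting $v:=u/M_u$, so that $v\in(0,1]$ with $\max v=1$, and the hypothesis becomes $|\nabla v|^2\leq N v^\beta$. Since the relevant range of $\beta$ coming from Lemma \ref{Lemma-weighted} lies in $(0,2)$, set $\alpha:=1-\beta/2\in(0,1)$ and compute $|\nabla v^\alpha|=\alpha v^{-\beta/2}|\nabla v|\leq\alpha\sqrt{N}$, so that $v^\alpha$ is globally Lipschitz on $\SS^n$ with constant $L:=\alpha\sqrt{N}$.

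Next I would exploit evenness to keep paths short. Since $u$ is even, both $x_{\min}$ and $-x_{\min}$ realize $m_u$, and because $d(x_{\max},x_{\min})+d(x_{\max},-x_{\min})=\pi$ on $\SS^n$, at least one of them---call it $\bar x$---satisfies $d(x_{\max},\bar x)\leq\pi/2$. Integrating $\nabla v^\alpha$ along the minimizing geodesic from $x_{\max}$ to $\bar x$ yields
\[
1-(m_u/M_u)^\alpha=v^\alpha(x_{\max})-v^\alpha(\bar x)\leq L\cdot d(x_{\max},\bar x)\leq\frac{\pi\alpha\sqrt{N}}{2},
\]
which rearranges to $m_u/M_u\geq(1-\pi\alpha\sqrt{N}/2)^{1/\alpha}$ and delivers the non-collapsing bound with a constant depending only on $n$, $\beta$, $N$.

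The main obstacle is the regime $\pi\alpha\sqrt{N}/2\geq 1$, where the lower bound produced above becomes non-positive and the Lipschitz step alone is vacuous. To treat this case I would invoke the strict spherical convexity of $u$: letting $K$ denote the origin-symmetric convex body with support function $u$, the identity $\nabla u(x_{\max})=0$ forces the support point in direction $x_{\max}$ to be $M_u\,x_{\max}\in\partial K$, and convexity of the support function then yields the pointwise bound $u(\xi)\geq M_u\,|\langle\xi,x_{\max}\rangle|$ on all of $\SS^n$. This gives a strong lower bound $v\geq|\cos\theta|$ on the spherical cap $\{|\langle\xi,x_{\max}\rangle|\geq\delta\}$, from which the Lipschitz estimate on $v^\alpha$ can be launched from boundary points of this cap rather than from $x_{\max}$ itself, thereby shrinking the effective geodesic distance one has to traverse. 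Optimizing the choice of $\delta$ against the Lipschitz propagation distance should then close the argument and yield a uniform positive lower bound on $m_u/M_u$ depending only on $n$, $\beta$, $N$.
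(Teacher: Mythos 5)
Your proof is correct, but it takes a genuinely different route from the paper's. The paper applies John's lemma to the origin-symmetric body $K$ with support function $u$, obtaining an ellipsoid $E\subset K\subset nE$ centred at the origin, and then plays the lower bound $u\geq u_E\geq a_1t$ along the great circle through the long and short axes against the same Lipschitz bound on $u^{(2-\beta)/2}$ that you derive; John's lemma is what introduces the $n$-dependence in its constant. You replace John's lemma by the sharper and more elementary observation that $\pm M_u x_{\max}\in K$ (since $\nabla u(x_{\max})=0$ and $u$ is even), hence $v:=u/M_u\geq|\langle\cdot\,,x_{\max}\rangle|$, and you use evenness a second time to put a minimum point within geodesic distance $\pi/2$ of $x_{\max}$. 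The final optimization you leave implicit does close, and the reason is worth recording: on the minimizing geodesic $\gamma$ from $x_{\max}$ to $x_{\min}$ (of length $d\leq\pi/2$), the point $\xi_0=\gamma(\pi/2-s)$ satisfies $v(\xi_0)\geq\cos(\pi/2-s)=\sin s\geq 2s/\pi$ and $d(\xi_0,x_{\min})\leq s$ (if $d<\pi/2-s$ one has $v(x_{\min})\geq\sin s$ directly), so
$v^{\alpha}(x_{\min})\geq(2s/\pi)^{\alpha}-\alpha\sqrt{N}\,s$ with $\alpha=1-\beta/2\in(0,1)$; because $\alpha<1$ the first term dominates for small $s$, e.g. $s_*=\min\bigl(\pi/2,\;\bigl((2/\pi)^{\alpha}/(2\alpha\sqrt{N})\bigr)^{1/(1-\alpha)}\bigr)$ yields $m_u/M_u\geq 2^{-1/\alpha}(2s_*/\pi)>0$, a constant depending only on $\beta$ and $N$ (in fact with no $n$-dependence, marginally better than the paper's bound). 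The one caveat, which you already flag, is that both arguments require $\beta\in(0,2)$ even though the lemma is stated for arbitrary $\beta>0$; this is harmless, since the application through Lemma \ref{Lemma-weighted} supplies $\beta<2(p-1)/(k-l)<2$.
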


\begin{proof} The proof is essentially the same as the result of Guan \cite[Lemma 3.1 and Corollary 3.1]{G}. 
For the reader's convenience, we include the proof here.

   Let $\Omega$ be the strictly convex body with the support function $u$. Since $u$ is an even, strictly spherical convex function, by John's Lemma, there exists an ellipsoid $E$ centered at the origin, such that
   \begin{eqnarray}\label{john}
       E\subset \Omega\subset n E.
   \end{eqnarray}
We write $E$ as
\begin{eqnarray*}
 \frac{x_{1}^{2}}{a_{1}^{2}}+\cdots+\frac{x_{n+1}^{2}}{a_{n+1}^{2}}=1,   
\end{eqnarray*}
with $a_{1}\geq a_{2}\cdots\geq  a_{n+1}>0$. Then we have 
\begin{eqnarray}\label{key-1}
    a_{1}\leq M_{u}\leq na_{1},\quad  m_{u}\leq a_{n+1} \leq n m_{u}.
\end{eqnarray}
The support function $u_{E}$ of the ellipsoid $E$ is given by
\begin{eqnarray*}
    u_{E}(x)=\left(a_{1}^{2}x_{1}^{2}+\cdots+a_{n+1}^{2}x_{n+1}^{2}\right)^{\frac{1}{2}},\quad x\in\SS^{n}.
\end{eqnarray*}
Hence by \eqref{john}, we have
\begin{eqnarray}\label{equ}
    u_{E}(x)\leq u(x)\leq nu_{E}(x),\quad \forall x\in\SS^{n}.
\end{eqnarray}
Restrict $u_{E}$ and $u$ on the slice $S:=\{x\in \SS^{n}: x=(x_{1}, 0, \cdots, 0, x_{n+1})\in\SS^{n}\}$.  First, we have
\begin{eqnarray}
    q(t):&=&u_{E}(t,0,\cdots, 0, \sqrt{1-t^{2}})\notag \\
    &=&(a_{1}^{2}t^{2}+(1-t^{2})a_{n+1}^{2})^{\frac{1}{2}}\geq a_{1}t, \quad \text{for}~t\in[0, 1].\notag
\end{eqnarray}
Then for any $t\in \left[0, \left(\frac{a_{1}}{a_{n+1}}\right)^{\frac{2-\beta}{2}}\right]$, 
\begin{eqnarray}\label{key-2}
    ta_{1}^{\frac{\beta}{2}}a_{n+1}^{\frac{2-\beta}{2}}\leq q\left(t \left(\frac{a_{n+1}}{a_{1}}\right)^{\frac{2-\beta}{2}}\right).
\end{eqnarray}
Denote 
\begin{eqnarray*}
    p(t):=\left[u \left(t,0,\cdots, 0, \sqrt{1-t^{2}}\right)\right]^{\frac{2-\beta}{2}}.
\end{eqnarray*}
From the weighted gradient estimate \eqref{weighted-est-1} and \eqref{key-1}, we get
\begin{eqnarray*}
   \left |\frac{d}{dt}p(t) \right|\leq N^{\frac{1}{2}}M_{u}^{1-\frac{\beta}{2}}\leq N^{\frac{1}{2}}(na_{1})^{\frac{2-\beta}{2}}.
\end{eqnarray*}
Using \eqref{equ}, we have
\begin{eqnarray*}
    p\left(t\left(\frac{a_{n+1}}{a_{1}}\right)^{\frac{2-\beta}{2}} \right)&\leq& p(0)+N^{\frac{1}{2}}(na_{1})^{\frac{2-\beta}{2}} t\left(\frac{a_{n+1}}{a_{1}} \right)^{\frac{2-\beta}{2}}\\
    &\leq &(n a_{n+1})^{\frac{2-\beta}{2}}+tN^{\frac{1}{2}}(n a_{n+1})^{\frac{2-\beta}{2}}.
\end{eqnarray*}
Therefore
\begin{eqnarray}\label{key-3}
    u\left(t \left(\frac{a_{n+1}}{a_{1}} \right)^{\frac{2-\beta}{2}}, 0,\cdots, 0, \left[1-t^{2} \left(\frac{a_{n+1}}{a_{1}}\right)^{2-\beta}\right]^{\frac{1}{2}}\right)\leq n
\left(1+tN^{\frac{1}{2}}\right)^{\frac{2}{2-\beta}}a_{n+1}.
\end{eqnarray}
Since $u(x)\geq u_{E}$, combining \eqref{key-2} and \eqref{key-3} we conclude that
\begin{eqnarray*}
    ta_{1}^{\frac{\beta}{2}}a_{n+1}^{\frac{2-\beta}{2}}\leq n \left(1+tN^{\frac{1}{2}}\right)^{\frac{2}{2-\beta}}a_{n+1}.
\end{eqnarray*}
Choosing $t=N^{-\frac{1}{2}}$,  it is clear that the above inequality implies 
\begin{eqnarray*}
    \frac{a_{1}}{a_{n+1}}\leq n^{\frac{2}{\beta}}N^{\frac{1}{\beta}}2^{\frac{4}{\beta(2-\beta)}},
    \end{eqnarray*}
together with \eqref{key-1}, we have
    \begin{eqnarray*}
        \frac{\max\limits_{\SS^{n}}u}{\min\limits_{\SS^{n}}u} %\leq \frac{a_{1}}{a_{n+1}}
        \leq n^{1+\frac{2}{\beta}}N^{\frac{1}{\beta}}2^{\frac{4}{\beta(2-\beta)}}.
    \end{eqnarray*}Hence we complete the proof.
\end{proof}

As a direct corollary of Lemma \ref{lemm-non-collosing}, we obtain uniform $C^{1}$ estimates for the solution $u$ to Equation  \eqref{support-equ}.
\begin{cor}\label{coro-C1}
      Let $1<p<k-l+1$ and $1\leq l<k<n$. Suppose  $u$ is an even,  positive and strictly spherical convex solution to Equation  \eqref{support-equ}. Then 
      \begin{eqnarray*}
          0<c<u<C, \quad \text{and}\quad |\n u|\leq C,
      \end{eqnarray*}
      where the constants $c, C$ depend on $n, k,l, p, \min\limits_{\SS^{n}}\varphi,$ %\max\limits_{\SS^{n}}f$
      and $\|\varphi\|_{C^{1}(\SS^n)}$.
\end{cor}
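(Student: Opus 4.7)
The plan is to combine the non-collapsing estimate of Lemma~\ref{lemm-non-collosing} with two applications of the maximum principle at the extrema of $u$. First, I would observe that Lemma~\ref{Lemma-weighted} immediately supplies the hypothesis $|\n u|^2/u^\beta \leq N M_u^{2-\beta}$ needed in Lemma~\ref{lemm-non-collosing}, so the non-collapsing bound $M_u/m_u \leq C_1$ follows, with $C_1$ depending only on $n,k,l,p,\min_{\SS^n}\varphi$ and $\|\varphi\|_{C^1(\SS^n)}$.

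Next, I would extract one-sided bounds on the extrema directly from equation~\eqref{support-equ}. At a maximum point $x_0$ of $u$ we have $\n u(x_0)=0$ and $\n^2 u(x_0)\leq 0$, hence $A(x_0)\leq u(x_0)\sigma$ as symmetric matrices. Applying the Newton-Maclaurin inequality in Lemma~\ref{pro-2.3}(1) with the index pairs $(n-l,n-k)$ and $(1,0)$ yields
\begin{eqnarray*}
\left(\frac{H_{n-l}(A)}{H_{n-k}(A)}\right)^{\frac{1}{k-l}}(x_0)\leq H_1(A(x_0))=\frac{\Delta u(x_0)+nu(x_0)}{n}\leq u(x_0).
\end{eqnarray*}
Substituting into \eqref{support-equ} gives $\varphi(x_0)u(x_0)^{p-1}\leq u(x_0)^{k-l}$, which, since $k-l-(p-1)>0$, produces the positive lower bound $M_u\geq (\min_{\SS^n}\varphi)^{1/(k-l-p+1)}$.

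Dually, at a minimum point $y_0$ of $u$ we have $A(y_0)\geq u(y_0)\sigma$. Invoking the monotonicity of the Hessian quotient $H_{n-l}/H_{n-k}$ in each eigenvalue on the G{\aa}rding cone $\G_{n-l}$ (a classical property of ratios of elementary symmetric polynomials, obtainable from Newton's inequalities after scaling all eigenvalues by $u$), I would conclude $H_{n-l}(A(y_0))/H_{n-k}(A(y_0))\geq u(y_0)^{k-l}$, and then \eqref{support-equ} forces $m_u\leq (\max_{\SS^n}\varphi)^{1/(k-l-p+1)}$. Combining the two one-sided bounds with non-collapsing closes the argument: $M_u\leq C_1 m_u\leq C_1(\max_{\SS^n}\varphi)^{1/(k-l-p+1)}$ bounds $u$ above, while $m_u\geq M_u/C_1\geq (\min_{\SS^n}\varphi)^{1/(k-l-p+1)}/C_1$ bounds $u$ below. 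The gradient estimate then reads off immediately from \eqref{weighted-est} since $|\n u|^2\leq N u^\beta M_u^{2-\beta}\leq N M_u^2$, which is now uniformly bounded. I expect no real obstacle here; the only point requiring any justification beyond what is already stated is the monotonicity of the quotient in the dual step, which is standard on the admissible cone.
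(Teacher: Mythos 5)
Your proposal is correct and follows essentially the same route as the paper: one-sided bounds on $\max_{\SS^n}u$ (from below) and $\min_{\SS^n}u$ (from above) via the maximum principle and the sign of the exponent $p+l-(k+1)<0$, combined with the non-collapsing estimate of Lemma~\ref{lemm-non-collosing} (whose hypothesis is supplied by Lemma~\ref{Lemma-weighted}), and the gradient bound read off from \eqref{weighted-est}. The paper states the maximum-principle step without detail; your Newton--Maclaurin and monotonicity justifications are exactly the standard fillers.
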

\begin{proof}
    By the maximum principle, we have
    \begin{eqnarray*}
        (\max\limits_{\SS^{n}}u)^{p+l-(k+1)}\leq \max\limits_{\SS^{n}}\varphi^{-1}, \quad \text{and}\quad(\min\limits_{\SS^{n}}u)^{p+l-(k+1)}\geq \min\limits_{\SS^{n}}\varphi^{-1}.
    \end{eqnarray*}
    Since $p\in(1, k-l+1)$,  $p+l-(k+1)<0$. Hence we have a uniform upper bound for $\min\limits_{\SS^{n}}u$
    and a lower uniform positive bound for $\max\limits_{\SS^{n}}u$
    . Lemma \ref{Lemma-weighted} and Lemma \ref{lemm-non-collosing} imply
    that $0<c<u<C$ for some constants $c $ and $C$.  The gradient bound follows from Lemma \ref{Lemma-weighted}. 
\end{proof}
Combining Corollary \ref{coro-C1} and Lemma \ref{lem-C2}, we have the following theorem.
\begin{thm}\label{thm-convex est}
    Let $1<p<k+1-l$ and $1\leq l<k<n$. Let  $u$ be an even,  strictly spherical convex solution to Equation  \eqref{support-equ}. Then 
    \begin{eqnarray*}
        \|u\|_{C^{2}(\SS^n)}\leq C,
    \end{eqnarray*}
    where the constant $C$ depends on $n, k,l, p, \min\limits_{\SS^{n}}\varphi$, %\max\limits_{\SS^{n}}\varphi$
    and $\|\varphi\|_{C^{2}(\SS^n)}$.
\end{thm}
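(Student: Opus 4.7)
The statement is essentially the synthesis of the a priori estimates already developed in this section, so the proof proposal is a short combining argument rather than a substantial new calculation. The plan is to chain together the $C^0$ and $C^1$ bounds from Corollary \ref{coro-C1} with the second-order estimate from Lemma \ref{lem-C2}, and then use strict spherical convexity to upgrade the trace estimate into a full $C^2$ estimate.

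First, I would invoke Corollary \ref{coro-C1}: since $u$ is an even, strictly spherically convex solution of \eqref{support-equ} with $1<p<k-l+1$, this corollary produces a constant $C_0$ depending on $n,k,l,p,\min_{\SS^n}\varphi,\|\varphi\|_{C^1(\SS^n)}$ such that $c\le u\le C_0$ and $|\n u|\le C_0$ on $\SS^n$. In particular $\|u\|_{C^1(\SS^n)}\le C_0$, which is exactly the hypothesis required to apply Lemma \ref{lem-C2}. Since $p\ge 1$ is obviously satisfied, we can view $u$ as an $(n-l)$-admissible solution of \eqref{wu equ} with $\widetilde\varphi=\varphi$ and invoke Lemma \ref{lem-C2} to conclude
\begin{eqnarray*}
\Delta u+nu\le C,
\end{eqnarray*}
where $C$ depends on $n,k,l,C_0$ and $\|\varphi\|_{C^2(\SS^n)}$, hence ultimately on the quantities listed in the theorem.

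Next, I would convert this trace bound into a pointwise bound on $\n^2 u$. Because $u$ is strictly spherically convex, the symmetric matrix $A=\n^2 u+u\sigma$ is positive definite at every point of $\SS^n$, so each eigenvalue $\lambda_i$ of $A$ satisfies
\begin{eqnarray*}
0<\lambda_i\le \s_1(A)=\Delta u+nu\le C.
\end{eqnarray*}
Therefore $\|A\|\le C$ pointwise, and using $\n^2 u=A-u\s$ together with the uniform upper bound on $u$ from Corollary \ref{coro-C1}, we obtain $|\n^2 u|\le C$ on $\SS^n$. Combined with the earlier $C^0$ and $C^1$ bounds this yields $\|u\|_{C^2(\SS^n)}\le C$, which is the claimed estimate.

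No step is a genuine obstacle here: the heavy lifting has already been done in Lemma \ref{Lemma-weighted} (weighted gradient estimate), Lemma \ref{lemm-non-collosing} (non-collapsing via John's ellipsoid), and Lemma \ref{lem-C2} (second-order estimate via concavity of $F$). The only mild point to verify is that the hypothesis $p\ge 1$ in Lemma \ref{lem-C2} is compatible with the range $1<p<k-l+1$ of the present theorem and that the dependence of the constants in the two cited results combines to give a constant depending only on $n,k,l,p,\min_{\SS^n}\varphi,\|\varphi\|_{C^2(\SS^n)}$, which is immediate from inspection of each statement.
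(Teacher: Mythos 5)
Your proposal is correct and follows exactly the paper's own (one-line) argument: the paper proves this theorem by "combining Corollary \ref{coro-C1} and Lemma \ref{lem-C2}", which is precisely your chain of the $C^0$/$C^1$ bounds into the second-order trace estimate, with positivity of $A$ converting the bound on $\Delta u+nu$ into a full Hessian bound. No issues.
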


\subsection{Proof of Theorem \ref{thm-new4}}\

When $1<p<k+1-l$ and $1\leq l< k<n$, it becomes challenging to prove that the kernel of the corresponding linearization operator of Equation  \eqref{support-equ} is trivial. Therefore, we resort to the topological degree method, as introduced in \cite{LYY}. Following a similar argument as presented in \cite[Theorem 1.5]{GWR} or \cite[Section 3]{GG02}, we establish the existence of an even, strictly spherical convex solution to Equation  \eqref{support-equ}. Hence, we just provide an outline for the proof.

\begin{proof}[\textbf{Proof of Theorem \ref{thm-new4}}]\ 

Let us consider a one-parameter family of Equation  \eqref{t-equ} as in the proof of Theorem \ref{thm_new2}, and define the Banach space
\begin{eqnarray*}
    \mathcal{A}:=\{u\in C^{4}(\SS^{n}): u(x)=u(-x),\quad \text{for~all}~x\in \SS^{n}\}.
\end{eqnarray*}
It is obvious that $\varphi_{t}$ satisfies the assumption in Theorem \ref{thm convex}. This implies for any $0\leq t\leq 1$, the solution $u^{t}\in\mathcal{A}$ to Equation  \eqref{t-equ} is strictly spherical convex.  From Theorem \ref{thm-convex est}, we have the $C^{2}$ estimates, and the high-order estimates follow from the Evans-Krylov theorem. Hence we can use the degree theory. In order to use the degree theory in \cite{LYY} to establish the existence of the solution, we only need to compute the degree at $t=0$. By \cite[Theorem 6.4]{Udo} (or \cite[Theorem 1.3]{LWan24}), we have that $u=1$ is the unique solution for $\varphi^{0}=1$, and the same argument in \cite{GG02} yields that the degree is nonzero. Hence we have the existence of theorem \ref{thm-new4}. The convexity of the solution follows from  Theorem \ref{thm convex} under the assumption \eqref{varphi convex}.
\end{proof}

%\noindent\textbf{\large Declarations} 

 \

%\noindent{\textbf{Acknowledgment}}: X.M. was partially supported by the Outstanding Doctoral Overseas Study Support Program of the University of Science and Technology of China. L.W. was partially supported by: MIUR excellence project: Department of Mathematics, University of Rome "Tor Vergata" CUP E83C18000100006.

\printbibliography 

\end{document}